\documentclass{article}
\pdfoutput=1%dla arxiva
\usepackage[english]{babel}
\usepackage{amsmath,amssymb,bbm,xcolor,ntheorem}%''theorem'' koliduje z ''mdframed''
\usepackage[framemethod=tikz]{mdframed}% przeróżne tabele
\usepackage[twoside]{geometry}
\usepackage[T1]{fontenc}
\usepackage{seqsplit}%dzielenie długich liczb
\usepackage{mathptmx}
\usepackage[activate={true,nocompatibility},final,tracking=true,kerning=true,spacing=true,factor=1100,stretch=10,shrink=10]{microtype}%ładniejszy skład
\usepackage{fancyhdr}
\geometry{left= 2.9cm, right= 2.9cm,top=2.5cm,bottom=2cm}

\pagestyle{fancy}
\fancyhf{}%kasujemy istniejące nagłówki
\setlength{\headheight}{15.2pt}

\fancyhfoffset[LE]{6mm}%przesunięcie
\fancyhead[LE]{\thepage.\hskip3mm\color{red brown}The {\L}ojasiewicz Exponent of
Semiquasihomogeneous
Singularities}
\fancyhead[RE]{\textsc{Szymon Brzostowski}}
\fancyhfoffset[RO]{6mm}
\fancyhead[LO]{Section \rightmark}
\fancyhead[RO]{\thepage.}

%%%%%%%%%% Start TeXmacs macros
%\catcode`\<=\active \def<{
%\fontencoding{T1}\selectfont\symbol{60}\fontencoding{\encodingdefault}}
%\catcode`\>=\active \def>{
%\fontencoding{T1}\selectfont\symbol{62}\fontencoding{\encodingdefault}}
%\catcode`\|=\active \def|{
%\fontencoding{T1}\selectfont\symbol{124}\fontencoding{\encodingdefault}}%powodowało zastrzeżenia kompilatora
\newcommand{\Eta}{\mathrm{H}}
\newcommand{\Mu}{\mathrm{M}}
\newcommand{\TeXmacs}{T\kern-.1667em\lower.5ex\hbox{E}\kern-.125emX\kern-.1em\lower.5ex\hbox{\textsc{m\kern-.05ema\kern-.125emc\kern-.05ems}}}
\newcommand{\assign}{:=}

\newcommand{\nin}{\not\in}
\newcommand{\tmem}[1]{{\em #1\/}}
\newcommand{\tmname}[1]{\textsc{#1}}
\newcommand{\tmop}[1]{\ensuremath{\operatorname{#1}}}
\newcommand{\tmrsup}[1]{\textsuperscript{#1}}
\newcommand{\tmstrong}[1]{\textbf{#1}}
\newcommand{\tmtextit}[1]{{\itshape{#1}}}
\newcommand{\tmtextsl}[1]{{\slshape{#1}}}
\newcommand{\withTeXmacstext}{This document has been produced using \TeXmacs \ (see \texttt{http://www.texmacs.org})}
\definecolor{grey}{rgb}{0.75,0.75,0.75}
%\definecolor{orange}{rgb}{1.0,0.5,0.5}%oryginalne tłumaczenie-brzydkie
\definecolor{orange}{rgb}{1.0,0.501,0.0}
\definecolor{brown}{rgb}{0.5,0.25,0.0}
\definecolor{pink}{rgb}{1.0,0.5,0.5}
\definecolor{dark blue}{rgb}{0.0,0.0,0.501}%{.11,.23,.60
\definecolor{dark green}{rgb}{0.0,0.501,0.0}
\definecolor{dark cyan}{rgb}{0.0,0.501,0.501}
\definecolor{pastel cyan}{rgb}{0.874,1.0,1.0}
\definecolor{pastel yellow}{rgb}{1.0,1.0,0.874}
\definecolor{grey blue}{rgb}{0.666,0.666,1.0}
\definecolor{red brown}{rgb}{0.666,0.0,0.0}

\newenvironment{itemizearrow}{\begin{itemize} }{\end{itemize}}

\newenvironment{proof}{\medskip\noindent\textbf{Proof\ }}{\hspace*{\fill}$\Box$\medskip}

%%%%%%%%%% End TeXmacs macros

{\theorembodyfont{\slshape}\newtheorem*{remark*}{Remark}}
{\theorembodyfont{\slshape}\newtheorem*{commentary*}{Commentary}}

\mdfsetup{linecolor=black,linewidth=0.5pt,skipabove=0.5em,skipbelow=0.5em,hidealllines=false,
innerleftmargin=5pt,innerrightmargin=5pt,innertopmargin=5pt,innerbottommargin=5pt}%to są główne ustawienia ramek

\newmdenv[hidealllines=false,innertopmargin=1ex,innerbottommargin=1ex,innerleftmargin=1ex,innerrightmargin=1ex]{tmframed}%tak się definiuje środowisko do otaczania ramką

\mdfdefinestyle{thmstyle}{linewidth=1pt,frametitlerule=false}%frametitle=false - tej opcji nie ma

\newmdtheoremenv[style=thmstyle,linecolor=dark blue, fontcolor=dark blue]{defi}{\color{dark blue}Definition}%numeruje

%\mdtheorem[style=thmstyle,linecolor=dark blue, fontcolor=dark blue]{defi}{\color{dark blue}Definition}%ta wersja umożliwia także nienumerowanie twierdzeń, ale ignoruje \theoremstyle i używa tylko stylu zapodanego od \mdfdefinestyle; tworzy nową linię po nagłówku
\newmdtheoremenv[style=thmstyle,linecolor=dark green, fontcolor=dark green]{twier}{\color{dark green}Theorem}

%\newtheorem{definition}{Definition}
%\newtheorem{defi}{\surroundwithmdframed[linewidth=2pt]{definition}}%nie ma ani ramki ani nagłówka

%Dwie wersje: numerowana i nie, bez nagłówka!
\newmdtheoremenv[style=thmstyle, fontcolor=dark blue]{notation}{\color{dark blue}Notation}
\renewtheorem*{notation*}{Notation}
\surroundwithmdframed[innerleftmargin=0pt,innerrightmargin=0pt,hidealllines=true,fontcolor=dark blue]{notation*}

{\theorembodyfont{\rmfamily}\newtheorem{example}{Example}}
\surroundwithmdframed[innerleftmargin=0pt,innerrightmargin=0pt,hidealllines=true,fontcolor=dark cyan]{example}%to pokazuje, że przy otaczaniu łatwiej nadać  kolory; ''\color{red}Example'' linijkę wyżej dodatkowo zmieniłoby nagłówek na czerwony

\newtheorem{lemma}{Lemma}
\surroundwithmdframed[innerleftmargin=0pt,innerrightmargin=0pt,hidealllines=true,fontcolor=red brown]{lemma}

\newmdtheoremenv[style=thmstyle, fontcolor=dark green,innerleftmargin=0pt,innerrightmargin=0pt,hidealllines=true]{proposition}{\color{dark green}Proposition}%można chyba napisać sobie styl bez odstępów do ramki

%\mdfsetup{linecolor=black,linewidth=4pt}%czemu nie działa?
\newmdtheoremenv[style=thmstyle,outerlinewidth=2pt,fontcolor=orange,innerrightmargin=1pt,hidealllines=true,leftline = true,font=\bfseries\boldmath]{corollary}{\color{orange}Corollary}%1pt marginesu potrzebny, bo obcina przecinki i kropki (pewnie dlatego, że gruby font)

\newtheorem*{question*}{Question}
\surroundwithmdframed[innerleftmargin=0pt,innerrightmargin=0pt,hidealllines=false,linecolor=grey,outerlinewidth=1.25pt,
backgroundcolor= yellow!40!red!40]{question*}

%\surroundwithmdframed[innerleftmargin=0pt,innerrightmargin=0pt,hidealllines=false,linecolor=grey,outerlinewidth=1.25pt,
%apptotikzsetting={\tikzset{mdfframetitlebackground/.append style={shade,left color=white, right color=blue!20}}}]{abstract}

\tikzstyle{titzdata}=[draw=gray, thick, fill=pastel yellow,
  text=black, rectangle,rounded corners, minimum height=0.5cm]%''grey'' i ``gray'' to nie to samo

\begin{document}

\title{The {\L}ojasiewicz Exponent of\\
Semiquasihomogeneous
Singularities\thanks{{\withTeXmacstext}} \thanks{MSC 2010: 32S05, 58K05\enspace Keywords: {\L}ojasiewicz exponent, quasihomogeneous singularity}}
\author{\tmname{Szymon Brzostowski}}
\maketitle

\begin{mdframed}[outerlinewidth = 0.75pt ,roundcorner = 5 pt,backgroundcolor=pastel cyan,innertopmargin=1.2\baselineskip,
skipabove={\dimexpr0.5\baselineskip+\topskip\relax},
skipbelow={1em},
singleextra={\node[titzdata,xshift=0.5\mdfboundingboxwidth,xshift=-0.25mm] at (P-|O) {\sc\bfseries Abstract};},
linecolor=gray]
\indent\indent Let $f: ( \mathbbm{C}^{n} ,0 ) \rightarrow ( \mathbbm{C},0
)$ be a semiquasihomogeneous function. We give a formula for the local
{\L}ojasiewicz exponent $\mathcal{L}_{0} ( f )$ of $f$, in terms of weights of
$f$. In particular, in the case of a quasihomogeneous isolated singularity
$f$, we generalize a formula for $\mathcal{L}_{0} ( f )$ of
{\tmname{Krasi{\'n}ski}}, {\tmname{Oleksik}} and {\tmname{P{\l}oski}}
({\cite{KOP09}}) from $3$ to $n$ dimensions. This was previously announced in
{\cite{TYZ10}}, but as a matter of fact it has not been proved correctly
there, as noticed by the {\tmname{AMS}} reviewer {\tmname{T.~Krasi{\'n}ski}}.

As a consequence of our result, we get that the {\L}ojasiewicz exponent is
invariant in topologically trivial families of singularities coming from a
quasihomogeneous germ. This is an affirmative partial answer to Teissier's
conjecture.
\end{mdframed}

\section{Introduction}

The (local) {\L}ojasiewicz exponent $l_{0} ( f )$ of a holomorphic map-germ
$f: ( \mathbbm{C}^{n} ,0 ) \rightarrow ( \mathbbm{C}^{m} ,0 )$ is one of the
possible generalizations of the order function from $1$ to $n$ indeterminates.
Namely, for $n=1$ the condition $\tmop{ord}  f \assign \inf   \{ \tmop{ord} 
f_{i} \} =k$ is equivalent to the condition
\begin{equation}
  C_{1}  | z |^{k} \leqslant \| f ( z ) \| \leqslant C_{2}  | z |^{k} ,
  \label{w1}
\end{equation}
for some positive constants $C_{1}$, $C_{2}$, in a neighbourhood of $0$. (Here
$k< \infty$ exactly when $f \neq 0$ i.e. when $f$ is a finite mapping.) For
$n>1$, the above condition splits: the optimal exponent in the right
inequality (that is the biggest one) leads to the ordinary notion of the order
$\tmop{ord}  f$ of $f$ while the optimal exponent in the left inequality (that
is the smallest one) is the {\L}ojasiewicz exponent $l_{0} ( f )$ of $f$.
(Note that $l_{0} ( f ) < \infty$ exactly when $f$ is a finite mapping, unlike
the order function.) To state it more formally,
\[ l_{0} ( f ) \assign \inf   \left\{ q \in \mathbbm{Q}_{>0} : \; C_{1}  \| z
   \|^{q} \leqslant \| f ( z ) \| , \text{ for some } C_{1} >0 \text{ and all
   } \| z \| \ll 1 \right\} . \]
The norms $\| \cdot \|$ in the definition are any convenient ones. An
important (and, at first, maybe a little surprising) fact is that $l_{0} ( f
)$, if finite, is a positive {\tmem{rational}} number. Indeed, one can prove
that in such a case the defining {\tmem{infimum}} is in fact the
{\tmem{minimum}}. Moreover, there exist holomorphic curves $0 \neq \varphi : (
\mathbbm{C},0 ) \rightarrow ( \mathbbm{C}^{n} ,0 )$ satisfying
\begin{equation}
  l_{0} ( f ) = \tfrac{\tmop{ord}  f \circ \varphi}{\tmop{ord}   \varphi} ,
  \label{w2}
\end{equation}
and such curves are optimal in the sense that for every $0 \neq \psi : (
\mathbbm{C},0 ) \rightarrow ( \mathbbm{C}^{n} ,0 )$ it holds $l_{0} ( f )
\geqslant \tfrac{\tmop{ord}  f \circ \psi}{\tmop{ord}   \psi}$. (For the
proofs of these facts, see {\cite{LT08}} or {\cite{Plo88}}.) It easily follows
that, for the optimal curves $\varphi$,
\[ \| f \circ \varphi \| \sim \| \varphi \|^{l_{0} ( f )} , \]
so in a way we recover the inequalities (\ref{w1}) that we started from.

Much is known about $l_{0} ( f )$ for $n=2$ (see
{\cite{LT08,KL77,Tei77a,CK88,Plo88,CK95,Len98,Plo01}}), the case $n \geqslant
3$ is however challenging still (for some recent results see e.g.
{\cite{Biv09,KOP09,Ole09,Plo10,BKO12,BE12}}).

In this paper we are interested in the {\tmem{{\L}ojasiewicz exponent of a
singularity}} $f: ( \mathbbm{C}^{n} ,0 ) \rightarrow ( \mathbbm{C},0 )$,
\begin{equation}
  \mathcal{L}_{0} ( f ) \assign l_{0} \left( \tfrac{\partial f}{\partial
  z_{1}} , \ldots , \tfrac{\partial f}{\partial z_{n}} \right) .
\end{equation}
More specifically, we give formulas for $\mathcal{L}_{0} ( f )$ if $f$ is a
(weakly) (semi-)quasihomogeneous singularity (see Definition \ref{de1}), in
terms of its weights. Such formulas are known when $n \leqslant 3$
({\cite{KOP09}} for the quasihomogeneous, and {\cite{BKO12}} for
the~semiquasihomogeneous case). In {\cite{TYZ10}}, there appeared an
{\tmem{incorrect}} proof of an analogous formula for general $n$
(cf.~{\tmname{AMS}} review MR2679619 for the details). We aim at~giving a
valid proof of this result (Theorem \ref{th3}).

Our approach to the problem follows closely that of {\cite{KOP09}}: first we
prove a general lemma which is interesting in its own right (Proposition
\ref{prop1}), and then~apply~it to deal with the non-generic situations for
the computation of the {\L}ojasiewicz exponent. After proving Theorem
\ref{th3}, we pass to the more general situation allowing ,,weak'' weights.
Here, most of the necessary ingredients are delivered by {\tmname{Saito}}
({\cite{Sai71}}) whose results allow us to reduce the general problem to the
semiquasihomogeneous one using stable equivalence (Corollary \ref{cor2.5} and
Theorem \ref{th4}) and also express the value of the {\L}ojasiewicz exponent
of a quasihomogeneous singularity in a coordinate-independent fashion (Theorem
\ref{th5}). We conclude with the
observation~that~{\tmem{Teissier's~conjecture}} (see at the end of
the paper) is valid in the class of weakly semiquasihomogeneous functions
(Corollary \ref{cor4}).

The main results of the paper are Theorems \ref{th3}--\ref{th5} and Corollary
\ref{cor4}.

\section{Definitions and Known Facts}

{\noindent}The following definitions are much in the spirit of
{\tmname{Saito}} {\cite{Sai71}} and {\tmname{Arnold}} {\cite{Arn74}}.

\begin{defi}\label{de1}
Let $f: ( \mathbbm{C}^{n} ,0 ) \rightarrow ( \mathbbm{C},0
)$ be a (germ of a) holomorphic function. Then:
\begin{itemizearrow}
  \item $f$ is an {\tmem{(isolated) singularity}}, if it has an isolated
  critical point at $0$
  
  \item $f$ is {\tmem{weakly quasihomogeneous of type $( d;l_{1} , \ldots
  ,l_{n} )$}}, shortly: {$f \in$\tmname{WQH}$( d;l_{1} , \ldots ,l_{n} )$}, if
  $l= ( l_{1} , \ldots ,l_{n} ) \in \mathbbm{R}^{n}$, $d \in \mathbbm{R}_{>0}$
  and for every monomial~$z^{a} =z_{1}^{a_{1}}\cdot \ldots \cdot z_{n}^{a_{n}}$ appearing
  in the expansion of $f$ with a non-zero coefficient it holds $\langle a,l
  \rangle \assign a_{1} l_{1} + \ldots +a_{n} l_{n} =d$; in particular, $f=0$
  is {\tmname{wqh}} of all types
\end{itemizearrow}
The numbers $l_{1} , \ldots ,l_{n}$ will be called {\tmem{weights}}. The
number $\deg_{l} ( z^{a} ) := \langle a,l \rangle$ will be referred to as the
{\tmem{weighted degree}} of a monomial $z^{a}$. For a series $g ( z ) =
\sum_{a \in \mathbbm{N}^{n}_{0}} g_{a} z^{a}$, its {\tmem{weighted order}} is
$\tmop{ord}_{l}  g \assign \underset{g_{a} \neq 0}{\inf} ( \deg_{l} ( z^{a} )
)$.
\begin{itemizearrow}
  \item $f$ is {\tmem{quasihomogeneous of type $( d;l_{1} , \ldots ,l_{n}
  )$}}, shortly: $f \in${\tmname{QH}}$( d;l_{1} , \ldots ,l_{n} )$, if it is
  {\tmname{wqh}} of type $( d;l_{1} , \ldots ,\allowbreak l_{n} )$ with $d,l_{1} , \ldots
  ,l_{n} \in \mathbbm{Q}$ and $l_{1} /d, \ldots ,l_{n} /d \in ( 0,1/2 ]$
\end{itemizearrow}

%\begin{tmframed}
   Note that from the definition it follows that a
  {\tmname{qh}} $f$ is necessarily a (germ of a) polynomial of order greater
  than or equal to $2$.
  \begin{itemizearrow}
    \item $f$ is {\tmem{weakly semiquasihomogeneous of type $( d;l_{1} ,
    \ldots ,l_{n} )$}}, shortly: $f \in${\tmname{WSQH}}$( d;l_{1} , \ldots
    ,l_{n} )$, if it can be written in the form $f=f_{0} +f'$, where $f_{0}$
    is a {\tmname{wqh}} {\tmem{singularity}} of type $( d;l_{1} , \ldots
    ,l_{n} )$, $\tmop{ord}  f' >1$ and every monomial appearing in the
    expansion of $f'$ is of weighted degree greater than $d$
  \end{itemizearrow}
  The singularity $f_{0}$ will be called the {\tmem{principal part of $f$}}.
  \begin{itemizearrow}
    \item $f$ is {\tmem{semiquasihomogeneous of type $( d;l_{1} , \ldots
    ,l_{n} )$}}, shortly: $f \in${\tmname{SQH}}$( d;l_{1} , \ldots ,l_{n} )$,
    if it is {\tmname{wsqh}} of type $( d;l_{1} , \ldots ,l_{n} )$ with
    $d,l_{1} , \ldots ,l_{n} \in \mathbbm{Q}$ and $l_{1} /d, \ldots ,
    l_{n} /d \in ( 0,1/2 ]$ (or $f_{0}$ is a {\tmname{qh}}
    singularity)
  \end{itemizearrow}
  It is known that a {\tmname{sqh}} $f$ (or a {\tmname{wsqh}} $f$ with
  positive weights) is automatically an isolated singularity {\cite{Arn74}}.
%\end{tmframed}
\end{defi}

\tmtextsl{
\begin{remark*}
It is easy to see that the types $( d;l_{1} , \ldots
,l_{n} )$ in the definitions can always be normalized to $( 1;l_{1} /d, \ldots
,\allowbreak l_{n} /d )$. Saito, \tmtextit{op.~cit.}, allowed also complex weights for
{\tmname{wqh}} functions, however, as he proved, it is often not restrictive
to consider only the rational ones. On the other hand, Arnold,
\tmtextit{op.~cit.}, considered mostly {\tmname{sqh}} functions; the
definition of {\tmname{wsqh}} functions is perhaps somewhat non-standard.
\end{remark*}}

%{\renderremark{{\wyr{Warning}}}{In the definition of {\tmname{wsqh}} functions
%$f$ one cannot claim that $d= \tmop{ord}_{l}  f_{0} < \tmop{ord}_{l}  f'$
%unless the weights $( l_{1} , \ldots ,l_{n} )$ are rational or positive, or
%$f$ is a polynomial. This problem is a consequence of the fact that in general
%the set $\mathbbm{Z}l_{1} + \ldots +\mathbbm{Z}l_{n} \subset \mathbbm{R}$ has
%accumulation points in $\mathbbm{R}$.}}

The following theorem holds {\cite[Thm.~1, Cor.~4 and Thm.~3]{KOP09}}.

\begin{twier}\label{th1}
Let $f: ( \mathbbm{C}^{n} ,0 ) \rightarrow ( \mathbbm{C},0
)$, where $n \leqslant 3$, be a weakly quasihomogeneous singularity of type $(
1;l_{1} , \ldots ,\allowbreak l_{n} )$ with positive rational weights. Put $w_{i} \assign
1/l_{i}$. Then
\begin{equation}
  \mathcal{L}_{0} ( f ) = \min \left( \max_{1 \leqslant i \leqslant n} ( w_{i}
  -1 ) , \underset{1 \leqslant i \leqslant n}{\prod} ( w_{i} -1 ) \right) .
  \label{w4}
\end{equation}
{\noindent}In particular, if $f$ is quasihomogeneous
\[ \mathcal{L}_{0} ( f ) = \max_{1 \leqslant i \leqslant n} ( w_{i} -1 ) .
\]
\end{twier}

\begin{remark*}
Actually, formula (\ref{w4}) is proved in {\cite{KOP09}}
only for $n=3$. However, for a function $f$ of \ $2$ indeterminates one can
consider the function $\tilde{f} \assign f+z_{3}^{2}$, which has the same
{\L}ojasiewicz exponent as $f$ and for which the weight $l_{3} =1/2$, and then
apply formula (\ref{w4}) to it to find an analogous formula for
$\mathcal{L}_{0} ( f )$.
\end{remark*}

Theorem \ref{th1} is known to generalize to the case of a {\tmname{sqh}}
function $f$ ({\cite[Theorem 3.2]{BKO12}}) in exactly the same form. Namely,
taking account of the remark above, one can state:

\begin{twier}\label{th2}
Let $f: ( \mathbbm{C}^{n} ,0 ) \rightarrow ( \mathbbm{C},0
)$, where $n \leqslant 3$, be a weakly semiquasihomogeneous function with
positive rational weights and principal part $f_{0}$. Then
\[ \mathcal{L}_{0} ( f ) =\mathcal{L}_{0} ( f_{0} ) . \]
\end{twier}

\section{Results}

We begin with a proof of a proposition which is a weaker version of \
{\cite[Thm.~2]{KOP09}}, but generalized to $n \geqslant 4$ indeterminates. We
remark that in {\tmem{loc.~cit.}} the theorem is stated as a very special case
of local Hilbert's Nullstellensatz and the authors conjecture it to be true in
any dimension ({\tmem{op.~cit.}}, Problem 1). It is however not the case,
already for four indeterminates (cf.~Example \ref{ex1}).

\begin{notation*}
For a germ $f \in \mathcal{O}^{n}$ of $n$
indeterminates and $i \in \{ 1, \ldots ,n \}$ we define $\nabla f \assign
\left( \frac{\partial f}{\partial z_{1}} , \ldots , \frac{\partial f}{\partial
z_{n}} \right)$ and $\text{}^{\hat{i}} \nabla f \assign \left( \frac{\partial
f}{\partial z_{1}} , \ldots , \widehat{\frac{\partial f}{\partial z_{i}}} ,
\ldots , \frac{\partial f}{\partial z_{n}} \right)$, where the hat means
omission. For a set $F$ of germs, $\mathcal{V} ( F )$ will denote the germ at
$0$ of the set of common zeroes of the system $F$.
\end{notation*}

\begin{proposition}
  \label{prop1}Let $f: ( \mathbbm{C}^{n} ,0 ) \rightarrow ( \mathbbm{C},0 )$
  be an isolated singularity such that $\mathcal{V} ( \text{}^{\hat{1}} \nabla
  f ) \subset \mathcal{V} ( z_{1} )$. Then $\tmop{ord}  f=2$. Moreover, there
  exists an $1<i \leqslant n$ such that the monomial $z_{1}z_{i}$ does appear
  in the expansion of $f$ with a non-zero coefficient while the monomial
  $z_{i}^{2}$ does not.
\end{proposition}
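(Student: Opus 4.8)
The plan is to translate the containment $\mathcal{V}({}^{\hat 1}\nabla f)\subseteq\mathcal{V}(z_1)$ into information about the $2$-jet of $f$, and to read both assertions off the Hessian. Write $z'=(z_2,\dots,z_n)$ and set $V:=\mathcal{V}({}^{\hat 1}\nabla f)$. Since $V$ is cut out by the $n-1$ germs $\partial f/\partial z_2,\dots,\partial f/\partial z_n$, every component of $V$ has dimension $\ge 1$. As $f$ is an isolated singularity, $\{0\}=\mathcal{V}(\nabla f)=\mathcal{V}(\partial f/\partial z_1)\cap V$, and because $V\subseteq\{z_1=0\}$ this exhibits $\{0\}$ as the intersection of $V$ with a single hypersurface. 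A variety of dimension $\ge 2$ meets a hypersurface in dimension $\ge 1$ (and if instead $V\subseteq\{\partial f/\partial z_1=0\}$ then $V\subseteq\mathcal{V}(\nabla f)=\{0\}$); either way $V$ must be a pure curve. Since $V$ lies in $\{z_1=0\}$, it is moreover exactly the critical locus of $\bar f:=f|_{z_1=0}$.

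Next I would isolate the decisive invariant $\beta:=\min_{j\ge 2}\operatorname{ord}_{z_1}\bigl(\tfrac{\partial f}{\partial z_j}(z_1,0,\dots,0)\bigr)$, the least $m$ for which some monomial $z_1^{m}z_j$ with $j\ge 2$ occurs in $f$. This $\beta$ is finite: otherwise all $\partial f/\partial z_j$ ($j\ge 2$) would vanish identically along the $z_1$-axis, putting the whole axis into $V$ and contradicting $V\subseteq\{z_1=0\}$. A monomial $z_1 z_i$ occurs precisely when $\beta=1$, and such a monomial has degree $2$, so proving $\beta=1$ gives $\operatorname{ord}f\le 2$; combined with $\operatorname{ord}f\ge 2$ (as $0$ is a critical point) it yields $\operatorname{ord}f=2$ together with the first half of the ``moreover''. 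Thus everything reduces to the claim $\beta=1$.

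To prove $\beta=1$ I would argue by contradiction. If $\beta\ge 2$ then no $z_1 z_j$ occurs, i.e. all mixed Hessian entries $\partial^2 f/\partial z_1\partial z_j$ ($j\ge 2$) vanish, so the linear parts $L_j$ of $\partial f/\partial z_j$ involve only $z'$; consequently the axis direction $e_1$ lies in the linear space $\Lambda:=\mathcal{V}(L_2,\dots,L_n)$, which contains the tangent cone $C(V)$. If the $z'$-block $A_0$ of the Hessian is nonsingular then $\Lambda=\mathbb{C}e_1$, whence $C(V)\subseteq\mathbb{C}e_1\cap\{z_1=0\}=\{0\}$ and $V$ is $0$-dimensional, contradicting that $V$ is a curve. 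The hard part is the degenerate case $A_0$ singular: then $\bar f$ has a degenerate $2$-jet and $C(V)$ may genuinely sit inside $\{z_1=0\}$, so the linear argument is insufficient. I expect to close this case by a weighted/Newton-order analysis along a branch $\varphi(s)=(0,\varphi'(s))$ of $V$ — exploiting that $f|_V\equiv 0$ while $\partial f/\partial z_1|_V$ has an isolated zero — or, in the spirit of \cite{KOP09}, by a Hilbert--Nullstellensatz estimate forcing either $\dim V\ge 2$ or a failure of isolatedness. This degenerate case is the main obstacle.

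Finally, for the clause that $z_i^{2}$ can be avoided, I would work with the quadratic part $Q$ of $f$ (now known to be nonzero). The containment forces $z_1\in\operatorname{span}(\partial Q/\partial z_2,\dots,\partial Q/\partial z_n)$; writing $z_1=\sum_{j\ge 2}\lambda_j\,\partial Q/\partial z_j$ and comparing coefficients already shows some $\partial^2 f/\partial z_1\partial z_i\neq 0$. To arrange in addition $\partial^2 f/\partial z_i^{2}=0$, I would normalize $Q$ by a linear change of the variables $z_2,\dots,z_n$ fixing $z_1$ — such changes preserve $\{z_1=0\}$ and hence the entire hypothesis — after which a variable coupled to $z_1$ but not to itself becomes visible, exactly as the substitution $u=z_2+z_3$ turns $z_1 z_2+\tfrac12(z_2+z_3)^2$ into $z_1 z_2+\tfrac12 u^{2}$. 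Verifying that this normalization can always be carried out compatibly with the previous steps is the remaining bookkeeping.
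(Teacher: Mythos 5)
There is a genuine gap, and it sits exactly where the whole difficulty of the proposition lies. Your reduction of both assertions to the claim $\beta=1$ is sound, and your argument in the case where the $z'$-block $A_{0}$ of the Hessian is nonsingular is correct --- but that is the easy case, and you openly leave the degenerate case unresolved, offering only two hoped-for routes. Neither works as stated: the ``weighted/Newton-order analysis along a branch'' is not carried out at all, and the ``Hilbert--Nullstellensatz estimate'' in the spirit of \cite{KOP09} is precisely what this paper shows to be unavailable --- by Example \ref{ex1}, for $n\geqslant 4$ the hypothesis $\mathcal{V} ( {}^{\hat{1}} \nabla f ) \subset \mathcal{V} ( z_{1} )$ does \emph{not} imply $z_{1}\in ( {}^{\hat{1}} \nabla f )$, i.e.\ \cite[Problem 1]{KOP09} fails, so no naive Nullstellensatz-type bound is on hand. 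The idea that actually closes the degenerate case (and the nondegenerate one, uniformly) is absent from your proposal: consider the deformation $f_{s}:=f+s\,z_{1}^{2}$. Then ${}^{\hat{1}} \nabla f_{s}={}^{\hat{1}} \nabla f$, so the curve $\mathcal{A}:=\mathcal{V} ( {}^{\hat{1}} \nabla f )$ does not move with $s$; since every branch $\varphi$ of $\mathcal{A}$ lies in $\{ z_{1}=0 \}$, the formula $\mu ( f_{s} ) = \sum_{\varphi} l_{\varphi} \operatorname{ord} \bigl( \tfrac{\partial f_{s}}{\partial z_{1}} \circ \varphi \bigr)$ shows that $\mu ( f_{s} ) = \mu ( f )$ for all $s$, because $\tfrac{\partial f_{s}}{\partial z_{1}}=\tfrac{\partial f}{\partial z_{1}}+2sz_{1}$ restricts to $\tfrac{\partial f}{\partial z_{1}}$ on $\mathcal{A}$. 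Trotman's theorem (\cite{Tro80}, \cite[Prop.~1.1]{PT12}) then makes the $\mu$-constant family $( f_{s} )$ equimultiple, and since $\operatorname{ord} f_{s}=2$ for $s\neq 0$, we get $\operatorname{ord} f=2$, hence $\beta=1$, with no nondegeneracy assumption. Without this deformation argument, or a genuine substitute for it, your proof of the central claim is incomplete.

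The final clause also hides more than ``bookkeeping''. Working only with linear changes of $z_{2},\dots ,z_{n}$, your desired normalization exists if and only if there is a vector $v$ in the $z'$-space with $B ( e_{1},v ) \neq 0$ and $B ( v,v ) =0$, where $B$ is the polar form of the $2$-jet $Q$; this \emph{fails} exactly when $Q= \alpha z_{1}^{2}+z_{1} \ell ( z' ) + \lambda\, \ell ( z' )^{2}$ with $\lambda \neq 0$ (e.g.\ $Q=z_{1}z_{2}+z_{2}^{2}$), for then the quadric $\{ B ( v,v ) =0 \}$ coincides, inside the $z'$-space, with the hyperplane $\{ B ( e_{1},v ) =0 \}$. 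Such $2$-jets cannot be normalized away; they must be shown \emph{incompatible with the hypothesis}, and that is a separate argument your tools do not supply. The paper does it by enlarging the group to shears $z_{i}\mapsto z_{i}-cz_{1}$ (these fix the first coordinate, hence preserve the hypothesis): in the exceptional case the shears remove all mixed terms, one then subtracts a multiple of $z_{1}^{2}$ (also harmless), and the splitting lemma --- performed with a change of coordinates fixing $z_{1}$ --- produces a germ $\tilde{h}$ of order $\geqslant 3$ still satisfying $\mathcal{V} ( {}^{\hat{1}} \nabla \tilde{h} ) \subset \mathcal{V} ( z_{1} )$, contradicting the first part. Note finally that your own illustration $f=z_{1}z_{2}+\tfrac{1}{2} ( z_{2}+z_{3} )^{2}$ is an isolated singularity satisfying the hypothesis in which the pattern ``$z_{1}z_{i}$ present, $z_{i}^{2}$ absent'' appears only \emph{after} the substitution $u=z_{2}+z_{3}$, not in the original coordinates; so this step is not cosmetic --- it is where the coordinate sensitivity of the conclusion is concentrated, and it demands the contradiction argument above rather than a routine verification.
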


\begin{proof}
  Let us consider the deformation $f_{s} ( z ) \assign f ( z ) +s\,z_{1}^{2}$ of
  the germ $f$. For any $s \in \mathbbm{C}$, it is $\mathcal{V} (
  \text{}^{\hat{1}} \nabla f_{s} ) =\mathcal{V} ( \text{}^{\hat{1}} \nabla f )
  =:\mathcal{A}$ because in fact $( \text{}^{\hat{1}} \nabla f_{s} )
  \mathbbm{C} \{ z_{1} , \ldots ,z_{n} \} = ( \text{}^{\hat{1}}
  \nabla f ) \mathbbm{C} \{ z_{1} , \ldots ,z_{n} \}$ as ideals. Take a set
  $\Phi$ of non-equivalent parametrizations of the curve $\mathcal{A}$. Then
  one has
  \[ \mu ( f_{s} ) = \sum_{\varphi \in \Phi} l_{\varphi} \tmop{ord}   \left(
     \frac{\partial f_{s}}{\partial z_{1}} \circ \varphi \right) , \]
  where the numbers $l_{\varphi}$ are the multiplicities of the branches
  $\varphi$ of the curve $\mathcal{A}$. But by assumption, $\varphi \subset
  \mathcal{V} ( \text{}^{\hat{1}} \nabla f ) \subset \mathcal{V} ( z_{1} )$
  and hence $\tmop{ord}   \left( \frac{\partial f_{s}}{\partial z_{1}} \circ
  \varphi \right) = \tmop{ord}   \left( \frac{\partial f}{\partial z_{1}}
  \circ \varphi \right)$, for every $\varphi \in \Phi$. Thus, one has $\mu (
  f_{s} ) = \mu ( f )$, $s \in \mathbbm{C}$, or in another words -- $( f_{s}
  )$ is a $\mu$-constant deformation of $f$. Using a result of
  {\tmname{Trotman}} (see {\cite{Tro80}} or {\cite[Prop.~1.1]{PT12}}), we
  conclude that the family $( f_{s} )$ is equimultiple. Since $f$ is a
  singularity, $\tmop{ord}  f_{s} =2$ for $s$ close to $0$, and hence also
  $\tmop{ord}  f=2$.
  
  Let $q ( z )$ be the quadratic form of $f$. Assume, to the contrary, that
  the form does not depend on $z_{1}$. Then, by the splitting lemma, $f$ can
  be transformed through a biholomorphic change of coordinates $\Psi$ into
  $\tilde{f} = \tilde{h} ( z_{1} , \ldots ,z_{k} ) +z_{k+1}^{2} + \ldots
  +z_{n}^{2}$, where $\tmop{ord}   \tilde{h} \geqslant 3$ and $k \geqslant 1$.
  Moreover, it is easy to see that $\Psi$ can be chosen so that $\Psi ( z ) =
  ( z_{1} , \ldots )$ (by assumption; just recall that $\Psi$ engages
  essentially at most those variables that appear in the form $q ( z )$).
  However, such change of parameters does not drag the zero set of
  $\text{}^{\hat{1}} \nabla f$ out of the hyperplane $z_{1} =0$, i.e.
  $\mathcal{V} ( \text{}^{\hat{1}} \nabla \tilde{f} ) \subset \mathcal{V} (
  z_{1} )$. Thus also $\mathcal{V} ( \text{}^{\hat{1}} \nabla \tilde{h} )
  \subset \mathcal{V} ( z_{1} )$, which is impossible by what we already know.
  It follows that $q$ does depend on $z_{1}$. More precisely, since every $f-
  \vartheta z^{2}_{1}$, $\vartheta \in \mathbbm{C}$, also fulfills the
  conditions of the proposition, we deduce that in the expansion of $f$ there
  has to appear a monomial of the form $z_{1} z_{i}$, $i \neq 1$. If for every
  such $i$, in $q ( z )$ there appeared also the monomial $z^{2}_{i}$, then we
  would be able to transform the function $f$ into one that does not contain
  $z_{1}$ in its quadratic form, possibly except for $z_{1}^{2}$, but still
  one that fulfills the assumptions of the proposition; contradiction.
\end{proof}

\begin{remark*}
For $f$ (semi)quasihomogeneous, instead of Trotman's
theorem one can use the results of {\cite{Gre86}} or {\cite{OS87}} to prove
that $\tmop{ord}  f=2$.
\end{remark*}

  \begin{example}
  \label{ex1}Although we will not prove it, we remark that using Proposition
  \ref{prop1} it is possible to show that every singularity $f: (
  \mathbbm{C}^{n} ,0 ) \rightarrow ( \mathbbm{C},0 )$ satisfying $\mathcal{V}
  ( \text{}^{\hat{1}} \nabla f ) \subset \mathcal{V} ( z_{1} )$ can be
  transformed into the form $f=z_{1} z_{2} +g ( z_{2} , \ldots ,z_{n} ) +h (
  z_{1} )$ by a formal change of coordinates whose first component is the
  identity. Thus, in order to prove that under the assumptions of Proposition
  \ref{prop1} it holds $z_{1} \in ( \text{}^{\hat{1}} \nabla f )$
  (cf.~{\cite[Problem 1]{KOP09}}), it is enough to check this for
  singularities of the form indicated above. Now, for $n=3$ it turns out that
  $g$ can be further transformed into one that does not depend on $z_{2}$,
  which gives an alternative proof of {\tmem{op.~cit.}} Theorem 2. For $n
  \geqslant 4$, let us consider any singularity $g_{0} =g_{0} ( z_{3} , \ldots
  ,z_{n} )$ such that $g_{0} \nin \left( \frac{\partial g_{0}}{\partial z_{3}}
  , \ldots , \frac{\partial g_{0}}{\partial z_{n}} \right)$, i.e. a $g_{0}$
  that is not quasihomogeneous in any system of coordinates, and put $f
  \assign z_{1} z_{2} + ( 1+z_{2} ) g_{0}$. Assume, to the contrary, that
  $z_{1} \in ( \text{}^{\hat{1}} \nabla f ) \mathbbm{C} \{ z_{1} , \ldots
  ,z_{n} \}$. It is easy to see that there has to exist a relation of the form
  $z_{1} = \frac{\partial f}{\partial z_{2}} +A_{3}  \frac{\partial
  f}{\partial z_{3}} + \ldots +A_{n}  \frac{\partial f}{\partial z_{n}}$,
  where $A_{j} \in \mathbbm{C} \{ z_{2} , \ldots ,z_{n} \}$. But this relation
  implies that $g_{0} \in \left( \frac{\partial f}{\partial z_{3}} , \ldots ,
  \frac{\partial f}{\partial z_{n}} \right) \mathbbm{C} \{ z_{2} , \ldots
  ,z_{n} \}$ and hence -- that also $g_{0} \in \left( \frac{\partial
  g_{0}}{\partial z_{3}} , \ldots , \frac{\partial g_{0}}{\partial z_{n}}
  \right) \mathbbm{C} \{ z_{3} , \ldots ,z_{n} \}$, contradiction. As a more
  specific example, one can consider for instance $f \assign z_{1} z_{2} + (
  1+z_{2} )  ( z_{3}^{4} +z_{3}^{2} z_{4}^{3} +z_{4}^{5} )$. It can be
  checked, using a computer algebra system, that $z_{1} \nin ( \nabla f )$ but
  $z_{1}^{2} \in ( \text{}^{\hat{1}} \nabla f )$.
  
  We suspect that it may be the case that $\mathcal{V} ( \text{}^{\hat{1}}
  \nabla f ) \subset \mathcal{V} ( z_{1} )$ for a singularity $f$ implies
  $z_{1}^{n-2} \in ( \text{}^{\hat{1}} \nabla f )$, for $n \geqslant 3$.
\end{example}

Using Proposition \ref{prop1} we easily deduce the following.

 \begin{corollary}
  For every semiquasihomogeneous function $f$ of type $( d;l_{1} , \ldots
  ,l_{n} )$ such that $0<l_{j} /d<1/2$, $j=1, \ldots ,n$, and every $i \in \{
  1, \ldots ,n \}$ it is
  \[ \mathcal{V} ( \text{}^{\hat{i}} \nabla f )\not\subset \mathcal{V} ( z_{i} ) . \]
\end{corollary}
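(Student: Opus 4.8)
The plan is to argue by contradiction, invoking Proposition \ref{prop1} and then extracting a contradiction from the \emph{strict} weight bounds $l_j/d<1/2$. Suppose, contrary to the claim, that $\mathcal{V}( \text{}^{\hat{i}} \nabla f ) \subset \mathcal{V}( z_i )$ for some index $i$. After permuting the coordinates---which merely permutes the weights and preserves all the hypotheses of the corollary---I may assume $i=1$. Since $f$ is semiquasihomogeneous, it is an isolated singularity (as recorded after Definition \ref{de1}), so the hypotheses of Proposition \ref{prop1} are met with respect to the first variable, and the proposition forces $\tmop{ord} f = 2$.

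The contradiction comes from comparing this with the weighted structure of $f$. Write $f = f_0 + f'$ with $f_0$ the principal part, so that every monomial of $f_0$ has weighted degree exactly $d$ and every monomial of $f'$ has weighted degree strictly greater than $d$. On the other hand, the strict inequalities $l_j/d < 1/2$ mean that any monomial $z^a$ of ordinary degree $|a| = a_1 + \cdots + a_n \leqslant 2$ has weighted degree $\langle a,l \rangle = \sum_j a_j l_j < 2\cdot \tfrac{d}{2} = d$. Hence no monomial of ordinary degree $\leqslant 2$ can occur in $f_0$ (its monomials have weighted degree $=d$) nor in $f'$ (its monomials have weighted degree $>d$). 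Therefore $f$ has no quadratic (nor linear) part at all, i.e.\ $\tmop{ord} f \geqslant 3$, which contradicts $\tmop{ord} f = 2$. As the reduction to $i=1$ is symmetric in the coordinates, the same argument disposes of every index $i$.

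I do not expect a genuine obstacle: the entire difficulty is packaged into Proposition \ref{prop1}, and the remaining step is the elementary observation that the strict bound $l_j/d<1/2$ pushes all degree-$\leqslant 2$ monomials below weighted degree $d$ and thus out of both $f_0$ and $f'$. The two points worth checking carefully are (i) that permuting variables legitimately reduces the general $i$ to the case $i=1$ actually treated in the proposition, and (ii) that semiquasihomogeneity is exactly what supplies the isolated-singularity hypothesis required to apply it. Note that the proof uses only the conclusion $\tmop{ord} f = 2$ of Proposition \ref{prop1}; the finer statement about the monomials $z_1 z_i$ and $z_i^2$ is not needed here, although it would give the same contradiction since those monomials are likewise of weighted degree $<d$.
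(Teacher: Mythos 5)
Your proof is correct and follows essentially the same route as the paper: the paper's (one-line) proof is precisely the observation that the strict bounds $l_{j}/d<1/2$ force $\tmop{ord} f>2$, so the containment $\mathcal{V}(\text{}^{\hat{i}}\nabla f)\subset\mathcal{V}(z_{i})$ is ruled out by Proposition \ref{prop1}, whose conclusion would give $\tmop{ord} f=2$. You have merely spelled out the details (the permutation reduction to $i=1$, the isolated-singularity hypothesis, and the weighted-degree count) that the paper leaves implicit.
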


\begin{proof}
  Since then $f$ is of order greater than $2$.
\end{proof}

  \begin{corollary}
  \label{cor2}Let $f: ( \mathbbm{C}^{n} ,0 ) \rightarrow ( \mathbbm{C},0 )$ be
  a quasihomogeneous singularity of type $( 1;l_{1} , \ldots ,l_{n} )$. Assume
  that $l_{1} \leqslant \ldots \leqslant l_{n}$ and $\mathcal{V} (
  \text{}^{\hat{1}} \nabla f ) \subset \mathcal{V} ( z_{1} )$. Then $f$ is a
  homogeneous polynomial of order $2$. In particular, $\mathcal{L}_{0} ( f ) =1$.
\end{corollary}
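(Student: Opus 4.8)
The plan is to apply Proposition~\ref{prop1} and then exploit the rigidity of the quasihomogeneous weights. Being a singularity, $f$ has an isolated critical point at $0$, so the hypothesis $\mathcal{V}(\text{}^{\hat{1}}\nabla f)\subset\mathcal{V}(z_{1})$ permits invoking Proposition~\ref{prop1}. This yields at once $\operatorname{ord}f=2$ together with an index $i$, $1<i\leqslant n$, for which the monomial $z_{1}z_{i}$ occurs in the expansion of $f$ with a non-zero coefficient.

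Next I would read off the constraints this imposes on the weights. Since $f$ is quasihomogeneous of type $(1;l_{1},\ldots,l_{n})$, every monomial of $f$ has weighted degree exactly $1$; applied to $z_{1}z_{i}$ this gives $l_{1}+l_{i}=1$. On the other hand, quasihomogeneity forces $l_{j}\in(0,1/2]$ for every $j$, so the relation $l_{1}+l_{i}=1$ with $l_{1},l_{i}\leqslant 1/2$ can hold only when $l_{1}=l_{i}=1/2$. Combining $l_{1}=1/2$ with the assumed ordering $l_{1}\leqslant\cdots\leqslant l_{n}$ and the bound $l_{j}\leqslant 1/2$ pins down every weight: $l_{1}=\cdots=l_{n}=1/2$. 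Consequently a monomial $z^{a}$ has weighted degree $\tfrac{1}{2}(a_{1}+\cdots+a_{n})$, which equals $1$ precisely when $a_{1}+\cdots+a_{n}=2$. Hence $f$ is a homogeneous polynomial of order $2$, establishing the first assertion.

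For the value of the {\L}ojasiewicz exponent I would argue directly rather than appeal to Theorem~\ref{th1}, which is available only for $n\leqslant 3$. A homogeneous polynomial of order $2$ with an isolated critical point at the origin is a nondegenerate quadratic form, so its Hessian is invertible and the gradient $\nabla f$ is a linear automorphism of $\mathbbm{C}^{n}$. Therefore $\|\nabla f(z)\|\sim\|z\|$ near $0$, whence $\mathcal{L}_{0}(f)=l_{0}(\nabla f)=1$.

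I do not anticipate a serious obstacle: essentially all the content is carried by Proposition~\ref{prop1}, and the remaining steps are a short numerical squeeze on the weights and an elementary computation of the {\L}ojasiewicz exponent of an invertible linear map. The only point deserving a little care is to obtain $\mathcal{L}_{0}(f)=1$ without the dimension restriction of Theorem~\ref{th1}, which the quadratic-form observation settles cleanly in all dimensions.
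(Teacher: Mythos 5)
Your proof is correct and follows the paper's own argument in its essential structure: Proposition~\ref{prop1} produces the monomial $z_{1}z_{i}$, quasihomogeneity gives $l_{1}+l_{i}=1$ with both weights in $(0,1/2]$, hence $l_{1}=l_{i}=1/2$, and the assumed ordering then forces every weight to equal $1/2$, so $f$ is a homogeneous polynomial of order~$2$. The only divergence is the final step: the paper invokes \cite[Lemme 2.4]{Plo85} to conclude $\mathcal{L}_{0}(f)=\max_{1\leqslant j\leqslant n}\bigl(\operatorname{ord}\tfrac{\partial f}{\partial z_{j}}\bigr)=1$, whereas you observe directly that a degree-$2$ homogeneous singularity is a nondegenerate quadratic form, so $\nabla f$ is an invertible linear map, $\|\nabla f(z)\|\sim\|z\|$ near $0$, and $\mathcal{L}_{0}(f)=1$. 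Your ending is more elementary and self-contained (and manifestly dimension-free, which was your stated concern about Theorem~\ref{th1}), though it buys nothing essential beyond avoiding one citation; both closings are immediate once $f$ is known to be a nondegenerate quadratic form.
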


\begin{proof}
  By Proposition \ref{prop1}, in $f$ there appears a monomial $z_{1} z_{i}$
  with a non-zero coefficient. It follows that $l_{1} =l_{i} =1/2$ and hence
  also all the other weights are equal to $1/2$. Now we can apply {\cite[Lemme
  2.4]{Plo85}} to conclude that $\mathcal{L}_{0} ( f ) = \underset{1 \leqslant
  j \leqslant n}{\max} \left( \tmop{ord}   \frac{\partial f}{\partial z_{j}}
  \right) =1$.
\end{proof}

\begin{twier}\label{th3}
Let $f: ( \mathbbm{C}^{n} ,0 ) \rightarrow ( \mathbbm{C},0
)$ be a semiquasihomogeneous function of type $( 1;l_{1} , \ldots ,l_{n} )$.
Put $w_{i} \assign 1/l_{i}$. Then
\begin{equation}
  \mathcal{L}_{0} ( f ) = \max_{1 \leqslant i \leqslant n} ( w_{i} -1 ) .
  \label{w5}
\end{equation}
\end{twier}

\begin{proof}
  First assume that $f$ is quasihomogeneous. Let $l_{1} \leqslant \ldots
  \leqslant l_{n}$. If \ $\mathcal{V} ( \text{}^{\hat{1}} \nabla f ) \subset
  \mathcal{V} ( z_{1} )$ then formula (\ref{w5}) is valid by Corollary
  \ref{cor2}. In the opposite case, it its enough to apply {\cite[Proposition
  2]{KOP09}}.
  
  For $f$ semiquasihomogeneous, Corollary 4.8 of {\cite{BE12}} or Proposition
  4.1 of {\cite{BKO12}} assert that $\mathcal{L}_{0} ( f_{0} ) \leqslant
  \mathcal{L}_{0} ( f )$, where $f_{0}$ is the principal part of $f$. In
  another words, \ $\max_{1 \leqslant i \leqslant n} ( w_{i} -1 ) \leqslant
  \mathcal{L}_{0} ( f )$. By {\cite[Proposition 2.2]{Plo85}} we obtain
  the~opposite inequality.
\end{proof}

It remains to consider the case of ,,weak weights''. For this purpose, we
adopt the results of {\cite{Sai71}} to {\tmname{wsqh}} functions. First, we
prove a {\tmname{wsqh}} variant of {\tmname{splitting lemma}}.

  \begin{lemma}
  \label{le1}Let $f: ( \mathbbm{C}^{n} ,0 ) \rightarrow ( \mathbbm{C},0 )$ be
  a weakly semiquasihomogeneous function of type $( 1;l ) \assign (1;
  \underbrace{p_{1} , \ldots ,p_{i}}_{x} ,\allowbreak \underbrace{q_{1} , \ldots
  ,q_{k}}_{y} , \underbrace{r_{1} , \ldots ,r_{i}}_{z} )$, where $2i+k=n$, and
  of the form
  \begin{equation}
    f ( x,y,z ) = (f_{0} ( y ) + \sum_{j=1}^{i} x_{j} z_{j} ) +f' ( x,y,z ) ,
    \label{w5.5}
  \end{equation}
  where $( x,y,z ) \assign ( x_{1} , \ldots ,x_{i} ,y_{1} , \ldots ,y_{k}
  ,z_{1} , \ldots ,z_{i} )$ and $f_{0} ( y ) + \sum_{j=1}^{i} x_{j} z_{j}$ is
  the principal part of $f$. Assume that $q_{1} , \ldots ,q_{k} >0$. Then
  either $f$ is stably equivalent to a weakly semiquasihomogeneous singularity
  of type $( 1;q_{1} , \ldots ,\allowbreak q_{k} )$ having $f_{0} ( y )$ as its principal
  part (if $k>0$) or is of type $\mathcal{A}_{1}$ (if $k=0$).
\end{lemma}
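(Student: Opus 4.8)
The plan is to split off the nondegenerate ``hyperbolic'' quadratic form $\sum_{j=1}^{i}x_{j}z_{j}$ by a weighted version of the splitting lemma, and then to check that the residual function — the restriction of $f$ to the critical section in the $(x,z)$-variables — is again weakly semiquasihomogeneous, now of type $(1;q)$ and with principal part $f_{0}(y)$. The splitting itself is routine; the weight bookkeeping is the real content.

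First I would record that $\sum_{j}x_{j}z_{j}$ is nondegenerate in the $2i$ variables $(x,z)$, its Hessian being the block matrix with zero diagonal blocks and identity off-diagonal blocks. Since every monomial of $f'$ has weighted degree $>1$, the only linear-in-$(x,z)$ parts of $\partial f/\partial x_{j}=z_{j}+\partial f'/\partial x_{j}$ and $\partial f/\partial z_{j}=x_{j}+\partial f'/\partial z_{j}$ are $z_{j}$, resp.\ $x_{j}$; hence the $2i\times 2i$ Jacobian of the system $\{\partial f/\partial x_{j}=0,\ \partial f/\partial z_{j}=0\}$ with respect to $(x,z)$ is invertible at the origin. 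The holomorphic implicit function theorem then yields a unique section $x_{j}=X_{j}(y)$, $z_{j}=Z_{j}(y)$ vanishing at $0$. Following the usual proof of the splitting lemma — the shift $\tilde{x}_{j}=x_{j}-X_{j}(y)$, $\tilde{z}_{j}=z_{j}-Z_{j}(y)$ followed by a $y$-dependent linear change normalizing the quadratic part in the new variables — I arrive at $f\circ\Phi=g(y)+\sum_{j=1}^{i}\tilde{x}_{j}\tilde{z}_{j}$, where $g(y)\assign f(X(y),y,Z(y))$.

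The heart of the matter, and the step I expect to be the main obstacle, is to verify that $g$ is weakly semiquasihomogeneous of type $(1;q)$ with principal part $f_{0}$. I would track weighted orders $\tmop{ord}_{q}$ with respect to the $q$-weights on the $y$-variables. An easy induction on the implicit-function iteration gives the non-strict bounds $\tmop{ord}_{q}X_{j}\geqslant p_{j}$ and $\tmop{ord}_{q}Z_{j}\geqslant r_{j}$ (so that substituting the section into any monomial $x^{\alpha}y^{\beta}z^{\gamma}$ can only raise its weighted degree to at least $\langle\alpha,p\rangle+\langle\beta,q\rangle+\langle\gamma,r\rangle$). Feeding these bounds back into the defining relation $X_{j}=-\,\partial f'/\partial z_{j}(X,y,Z)$, and using that every monomial of $f'$ has weighted degree strictly greater than $1$, upgrades them to the strict bounds $\tmop{ord}_{q}X_{j}>p_{j}$, $\tmop{ord}_{q}Z_{j}>r_{j}$. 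Consequently $g-f_{0}=\sum_{j}X_{j}Z_{j}+f'(X,y,Z)$ has weighted order $>1$: the first sum because $\tmop{ord}_{q}(X_{j}Z_{j})>p_{j}+r_{j}=1$ by strictness, and the second because each substituted monomial of $f'$ already has weighted degree $>1$. Thus the $q$-weighted-degree-$1$ part of $g$ is exactly $f_{0}(y)$, so $g$ is weakly semiquasihomogeneous of type $(1;q)$ with principal part $f_{0}$; as $q_{1},\dots,q_{k}>0$, it is automatically an isolated singularity. That $f_{0}(y)$ is itself a wqh singularity in the $y$-variables follows from the principal part being a singularity, since $\nabla(f_{0}+\sum x_{j}z_{j})=0$ forces $x=z=0$ and $\nabla f_{0}(y)=0$.

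Finally, the form $\sum_{j}\tilde{x}_{j}\tilde{z}_{j}$ is a sum of $i$ hyperbolic planes in the $2i$ variables $(\tilde{x},\tilde{z})$, hence a nondegenerate quadratic form; removing it exhibits $f$ as stably equivalent to $g$, which settles the case $k>0$. If $k=0$ there are no $y$-variables, the residual $g$ is absent, and the same computation gives $f\circ\Phi=\sum_{j=1}^{i}\tilde{x}_{j}\tilde{z}_{j}$, a nondegenerate quadratic form in all $n=2i$ variables; that is, $f$ is of type $\mathcal{A}_{1}$.
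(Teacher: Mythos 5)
Your overall strategy (implicit function theorem to get the critical section, parametrized splitting lemma, then weight bookkeeping on the residual function $g(y)=f(X(y),y,Z(y))$) is genuinely different from the paper's, and it is not unreasonable; but as written it contains a false claim that is load-bearing. You assert that ``since every monomial of $f'$ has weighted degree $>1$, the only linear-in-$(x,z)$ parts of $\partial f/\partial x_{j}=z_{j}+\partial f'/\partial x_{j}$ and $\partial f/\partial z_{j}=x_{j}+\partial f'/\partial z_{j}$ are $z_{j}$, resp.\ $x_{j}$.'' This is exactly the intuition from the \emph{ordinary} semiquasihomogeneous case (all weights in $(0,1/2]$, so every ordinary-degree-$2$ monomial has weighted degree $\leqslant 1$), and it fails precisely in the weak situation the lemma is designed for: the only constraints here are $p_{j}+r_{j}=1$ and $q_{m}>0$, so $f'$ may well contain ordinary-degree-$2$ monomials of weighted degree $>1$. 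For instance, with $p_{1}=p_{2}=9/10$, $r_{1}=r_{2}=1/10$, the monomial $x_{1}x_{2}$ has weighted degree $9/5>1$ and is allowed in $f'$, whence $\partial f/\partial x_{1}=z_{1}+x_{2}+\cdots$. The invertibility of the $(x,z)$-Jacobian at $0$ is still true, but it needs a different argument (e.g.\ under the scaling $x_{j}\mapsto t^{p_{j}}x_{j}$, $z_{j}\mapsto t^{r_{j}}z_{j}$ the Hessian determinant of the $(x,z)$-quadratic part of $f$ is independent of $t$ and tends, as $t\rightarrow 0$, to that of $\sum_{j}x_{j}z_{j}$), and you give none.

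More damagingly, that false simplification is what makes your ``easy induction'' easy. With cross linear terms present, the fixed-point relations are no longer of the form $X_{j}=-\partial f'/\partial z_{j}(X,y,Z)$ with right-hand side of higher order: the degree-$N$ coefficients of $X_{j}$ depend on the degree-$N$ coefficients of the other unknowns $X_{m},Z_{m}$ (through monomials $x_{m}z_{j}$, $z_{m}z_{j}$ of $f'$), so the induction establishing $\mathrm{ord}_{q}X_{j}\geqslant p_{j}$, $\mathrm{ord}_{q}Z_{j}\geqslant r_{j}$ is circular as stated; one must break the circularity by an additional device, e.g.\ ordering the unknowns by their weights (note $x_{m}z_{j}\in f'$ forces $p_{m}>p_{j}$, so the dependence is strictly weight-increasing, hence acyclic). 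A second glossed-over point: since $p_{j},r_{j}$ may be negative or irrational, the weighted degrees of the infinitely many monomials of $f'$ need not be bounded away from $1$ (one can have $\mathrm{ord}_{l}f'=1$ even though each monomial has degree $>1$), so any estimate phrased as a weighted order of an infinite sum must instead be run monomial-by-monomial on the $y$-side, where positivity of $q$ keeps the degree set discrete. These are precisely the traps the paper's proof is engineered to avoid: it first replaces $f$ by a \emph{polynomial} representative $g$ (legitimate by finite determinacy, with determinacy degree bounded by $\mu(f_{0})$), so that $\mathrm{ord}_{l}g'>1$ strictly, then performs a finite, explicitly bounded ``completing the product'' iteration producing $f_{0}(y)+\Eta^{(m)}(y)+\sum_{j}(x_{j}^{2}+z_{j}^{2})$, and finally transfers the conclusion back to $f$ by determinacy, never invoking the implicit function theorem or weighted orders of analytic (non-polynomial) data. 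Your route could likely be completed along the lines indicated above, but the proposal as written does not do so.
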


\begin{proof}
  For $k=0$ it is enough to apply the ordinary splitting lemma; hence in the
  following we will assume that $k>0$. Also we exclude the case $i=0$, as it
  is trivial.
  
  First note that by our assumptions it is $p_{j} +r_{j} =1$ ($j=1, \ldots
  ,i$). Let $\Mu \assign \mu ( f_{0} ) < \infty$ and let $g \in
  \text{{\tmname{WSQH}}} ( 1;l )$ be such that $\tmop{ord}_{l}   ( f-g ) >
  \Mu$ and of the form $g ( x,y,z ) = (f_{0} ( y ) + \sum_{j=1}^{i} x_{j}
  z_{j} ) +g' ( x,y,z )$, $(f_{0} ( y ) + \sum_{j=1}^{i} x_{j} z_{j} )$ being
  the principal part of $g$ and $g'$ being a non-zero polynomial. Recall that
  by Definition \ref{de1} necessarily $\tmop{ord}  g>1$. We claim that for
  each $m \in \mathbbm{N}$ it is possible to write $g$ as
  \begin{equation}
    g ( x,y,z ) =f_{0} ( y ) + \Eta^{( m )} ( y ) + \Lambda^{( m )} ( x,y,z )
    , \label{w6.5}
  \end{equation}
  where $\tmop{ord}_{l}   \Eta^{( m )} >1$,
  \begin{equation}
    \Lambda^{( m )} ( x,y,z ) = \sum_{1 \leqslant j \leqslant i} ( x_{j} +
    \Gamma^{( m )}_{j} ( x,y,z ) )  ( z_{j} + \Delta_{j}^{( m )} ( x,y,z ) )
    +R^{( m )} ( x,y,z ) , \label{w7.5}
  \end{equation}
  $\tmop{ord}_{l}   \Gamma^{( m )}_{j} \geqslant d^{( 1 )} -r_{j}$,
  $\tmop{ord}_{l}   \Delta^{( m )}_{j} \geqslant d^{( 1 )} -p_{j}$ $( j=1,
  \ldots ,i )$ and $\tmop{ord}_{l}  R^{( m )} \geqslant m ( d^{( 1 )} -1 ) +1$
  with $d^{( 1 )} \assign \tmop{ord}_{l}  g' \in ( 1, \infty )$. Moreover
  $\Eta^{( m )}$, $\Lambda^{( m )}$, $\Gamma_{j}^{( m )}$, $\Delta^{( m
  )}_{j}$, $R^{( m )}$ are polynomials in $x,y,z$, vanishing at $0$.
  
  Indeed, for $m=1$ it is enough to put $R^{( 1 )} \assign g' ( x,y,z )$ and
  $\Lambda^{( 1 )} \assign \sum_{j=1}^{i} x_{j} z_{j} +R^{( 1 )}$ so that
  $\Gamma^{( 1 )}_{j} \assign \Delta^{( 1 )}_{j} \assign 0$ $( j=1, \ldots ,i
  )$ and $\Eta^{( 1 )} \assign 0$.
  
  Now, assuming (\ref{w6.5}) and (\ref{w7.5}) for some $m \in \mathbbm{N}$, we
  decompose $R^{( m )}$ into {\tmem{polynomials}} in the following way:
  \begin{equation}
    R^{( m )} = \eta^{( m+1 )} ( y ) + \sum_{1 \leqslant j \leqslant i} (
    x_{j} \delta^{( m+1 )}_{j} ( x,y,z ) +z_{j} \gamma^{( m+1 )}_{j} ( x,y,z )
    ) , \label{w8}
  \end{equation}
  where $\tmop{ord}_{l}   \eta^{( m+1 )} >1$ and $x_{j} \delta^{( m+1 )}_{j}$,
  $z_{j} \gamma^{( m+1 )}_{j}$ are of weighted order greater than or equal to
  $\tmop{ord}_{l}  R^{( m )}$. We put $\Eta^{( m+1 )} \assign \Eta^{( m )} +
  \eta^{( m+1 )}$, $\Lambda^{( m+1 )} \assign \Lambda^{( m )} - \eta^{( m+1 )}
  ( y )$, $\Gamma^{( m+1 )}_{j} \assign \Gamma^{( m )}_{j} + \gamma^{( m+1
  )}_{j}$ and $\Delta^{( m+1 )}_{j} \assign \Delta^{( m )}_{j} + \delta^{( m+1
  )}_{j}$ $( j=1, \ldots ,i )$; clearly, these are polynomials, vanishing at
  $0$. Moreover, $\tmop{ord}_{l}^{}   \Eta^{( m+1 )} >1$,
  \begin{eqnarray}
    \tmop{ord}_{l}   \gamma^{( m+1 )}_{j} & \geqslant & \tmop{ord}_{l}  R^{( m
    )} - \tmop{ord}_{l}  z_{j} \geqslant m ( d^{( 1 )} -1 ) +1-r_{j} \geqslant
    d^{( 1 )} -r_{j} ,  \label{w8.5}\\
    \tmop{ord}_{l}   \delta^{( m+1 )}_{j} & \geqslant & \tmop{ord}_{l}  R^{( m
    )} - \tmop{ord}_{l}  x_{j} \geqslant m ( d^{( 1 )} -1 ) +1-p_{j} \geqslant
    d^{( 1 )} -p_{j}  \label{w9.5}
  \end{eqnarray}
  and hence $\tmop{ord}_{l}   \Gamma^{( m+1 )}_{j} \geqslant d^{( 1 )}
  -r_{j}$, $\tmop{ord}_{l}   \Delta^{( m+1 )}_{j} \geqslant d^{( 1 )} -p_{j}$
  $( j=1, \ldots ,i )$. By (\ref{w7.5}) and (\ref{w8}) we have
  \begin{eqnarray*}
    \Lambda^{( m+1 )} & = & \sum_{1 \leqslant j \leqslant i} ( x_{j} +
    \Gamma^{( m )}_{j} )  ( z_{j} + \Delta_{j}^{( m )} ) +R^{( m )} - \eta^{(
    m+1 )} =\\
    & = & \sum_{1 \leqslant j \leqslant i} ( x_{j} + \Gamma^{( m )}_{j} )  (
    z_{j} + \Delta_{j}^{( m )} ) + \sum_{1 \leqslant j \leqslant i} ( x_{j}
    \delta^{( m+1 )}_{j} +z_{j} \gamma^{( m+1 )}_{j} ) =\\
    & = & \sum_{1 \leqslant j \leqslant i} ( x_{j} + \Gamma^{( m+1 )}_{j} ) 
    ( z_{j} + \Delta_{j}^{( m+1 )} ) +R^{( m+1 )} ,
  \end{eqnarray*}
  where $R^{( m+1 )} \assign - \sum_{1 \leqslant j \leqslant i} ( \Gamma^{( m
  )}_{j} \delta^{( m+1 )}_{j} + \gamma^{( m+1 )}_{j} \Delta^{( m+1 )}_{j} )$.
  Using (\ref{w9.5}) and induction hypothesis, $\tmop{ord}_{l}   \Gamma^{( m
  )}_{j} \delta^{( m+1 )}_{j} = \tmop{ord}_{l}   \Gamma^{( m )}_{j} +
  \tmop{ord}_{l}   \delta^{( m+1 )}_{j} \geqslant ( d^{( 1 )} -r_{j} ) + ( m (
  d^{( 1 )} -1 ) +1-p_{j} ) = ( m+1 )  ( d^{( 1 )} -1 ) +1$ $( j=1, \ldots ,i
  )$ and similarly for the other terms. Hence also $\tmop{ord}_{l}  R^{( m+1
  )} \geqslant ( m+1 )  ( d^{( 1 )} -1 ) +1$. Thus, (\ref{w6.5}) and
  (\ref{w7.5}) hold for $m+1$ and -- by induction -- for all $m \in
  \mathbbm{N}$.
  
  Fix any $m \in \mathbbm{N}$ and consider $\tilde{g}_{m} \assign g-R^{( m
  )}$. Since $\tmop{ord}_{l}   \Gamma^{( m )}_{j} \geqslant d^{( 1 )} -r_{j}
  >1-r_{j} =p_{j} = \deg_{l}  x_{j}$ and similarly $\tmop{ord}_{l}   \Delta^{(
  m )}_{j} \geqslant d^{( 1 )} -p_{j} > \deg_{l}  z_{j}$ for $j=1, \ldots ,i$,
  putting $\Psi ( x,y,z ) \assign ( x_{1} + \Gamma^{( m )}_{1} , \ldots ,x_{i}
  + \Gamma^{( m )}_{i} ,y,z_{1} + \Delta^{( m )}_{1} , \ldots ,z_{i} +
  \Delta^{( m )}_{i} )$ we easily see that $\Psi ( 0 ) =0$ and the matrix
  $\left. \frac{\partial \Psi}{\partial ( x,y,z )} \right|_{( x,y,z ) =0}$ is,
  up to permutation of rows, a triangular one. Composing $\tilde{g}_{m}$ with
  $\Psi^{-1}$ and then with a linear transformation, we find by (\ref{w6.5})
  and (\ref{w7.5}) that $\tilde{g}_{m}$ goes into the form
  \begin{equation}
    \breve{g}_{m} ( x,y,z ) :=f_{0} ( y ) + \Eta^{( m )} ( y ) +
    \sum_{j=1}^{i} ( x^{2}_{j} +z_{j}^{2} ) . \label{w10.5}
  \end{equation}
  Since (by our assumptions and Definition \ref{de1}) $f_{0}$ is a
  {\tmname{wqh}} singularity of type $( 1;q ) \assign ( 1;q_{1} , \ldots
  ,q_{k} )$ and $\tmop{ord}_{q}   \Eta^{( m )} \allowbreak = \tmop{ord}_{l}   \Eta^{( m )}
  >1$, we deduce that $\breve{g}_{m}$ is {\tmname{wsqh}} of type $\left( 1;
  \tfrac{1}{2} , \ldots , \frac{1}{2} ,q_{1} , \ldots ,q_{k} , \tfrac{1}{2} ,
  \ldots , \frac{1}{2} \right)$, and since all these weights are
  {\tmem{positive}}, we conclude that $\breve{g}_{m}$ is a singularity. More
  precisely, $\mu ( \breve{g}_{m} ) = \mu ( f_{0} ) =M< \infty$ (by the very
  same reasoning as in {\cite[Theorem 3.1]{Arn74}} and by the
  stable-equivalence-invariancy of the Milnor number). Hence, the degree of
  (right) determinacy of $\breve{g}_{m}$ -- and thus also of $\tilde{g}_{m}$
  -- can be bounded from above by $M$.
  
  Now recall that $\tmop{ord}_{l}  R^{( m )} \geqslant m ( d^{( 1 )} -1 ) +1
  \xrightarrow[m \rightarrow \infty]{} \infty$, because $d^{( 1 )} >1$. Since
  there are only finitely many monomials of given (ordinary) degree, it
  follows that $\overline{\lim}_{m \rightarrow \infty}   \tmop{ord}  R^{( m )}
  = \infty$. Hence, there exists an $m_{0} \in \mathbbm{N}$ for which the
  order of $R^{( m_{0} )}$ is higher than the number $M$. $\tilde{g}_{m_{0}}$
  being $M$-determined, this means that $\tilde{g}_{m_{0}}$ is
  biholomorphically equivalent to $g= \tilde{g}_{m_{0}} +R^{( m_{0} )}$ and
  thus we conclude that also $g$ is $M$-determined and biholomorphically
  equivalent to $\breve{g}_{m_{0}}$. But in such a case $f$ is right
  equivalent to $g$ and hence -- to $\breve{g}_{m_{0}}$. Finally, from
  (\ref{w10.5}) it follows that $f$ is stably equivalent to the
  {\tmname{wsqh}} singularity $f_{0} ( y ) + \Eta^{( m_{0} )} ( y )$ of type
  $( 1;q )$ and of principal part equal to $f_{0}$. The lemma is proved.
\end{proof}

  \begin{corollary}
  \label{cor2.5}Let $f: ( \mathbbm{C}^{n} ,0 ) \rightarrow ( \mathbbm{C},0 )$
  be a weakly semiquasihomogeneous function of type $( 1;l_{1} , \ldots ,l_{n}
  )$. Let $l_{1} \leqslant \ldots \leqslant l_{i} \leqslant 0<l_{i+1}
  \leqslant \ldots \leqslant l_{i+k} <1 \leqslant l_{i+k+1} \leqslant \ldots
  \leqslant l_{n}$. Then $f$ is a singularity and is either stably equivalent
  to a weakly semiquasihomogeneous function of type $( 1;l_{i+1} , \ldots
  ,l_{i+k} )$ if $k \neq 0$, or is of type $\mathcal{A}_{1}$ if $k=0$.
\end{corollary}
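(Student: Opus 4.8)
The plan is to deduce the statement from the weighted splitting lemma (Lemma \ref{le1}), once the principal part of $f$ has been put into the hyperbolic normal form (\ref{w5.5}). Group the variables according to their weights: let $V^{-} \assign \{ m : l_{m} \leqslant 0 \}$, $V^{0} \assign \{ m : 0<l_{m} <1 \}$ and $V^{+} \assign \{ m : l_{m} \geqslant 1 \}$, so that with the given ordering $|V^{-}| =i$, $|V^{0}| =k$ and $|V^{+}| =n-i-k$. Everything hinges on analysing the principal part of $f$ -- a \tmname{wqh} singularity of type $( 1;l )$ -- and showing that, after a suitable change of coordinates, it equals $f_{0} ( y ) + \sum_{j} x_{j} z_{j}$, where the $x_{j}$ are indexed by $V^{-}$, the $z_{j}$ by $V^{+}$, the $y$ by $V^{0}$, and $f_{0} ( y )$ is a \tmname{wqh} singularity in the middle variables alone.

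The structural core is the claim that the extreme weights are forced to pair up. Since the principal part is an isolated singularity it depends genuinely on every variable, and a monomial of weighted degree $1$ that involves a variable of weight $\geqslant 1$ cannot be linear (as the order is $\geqslant 2$), so it must also carry a variable of weight $\leqslant 0$. \tmname{Saito}'s structure theory for quasihomogeneous singularities (\cite{Sai71}) upgrades this coupling to the exact conclusion that $|V^{-}| =|V^{+}| =i$, hence $n=2i+k$, that the weights pair as $l_{j} +l_{n+1-j} =1$ for $j=1, \ldots ,i$ (so $p_{j} \assign l_{j} \leqslant 0$ and $r_{j} \assign l_{n+1-j} \geqslant 1$ satisfy $p_{j} +r_{j} =1$), and that the hyperbolic pairs can be separated off to give the normal form above. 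The coordinate change realising it can be taken weighted-homogeneous, so it respects the weighted filtration and carries the whole germ $f$ into a germ that is once more \tmname{wsqh} of type $( 1;l )$ and of the exact shape (\ref{w5.5}), with principal part $f_{0} ( y ) + \sum_{j} x_{j} z_{j}$.

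With the normal form secured, Lemma \ref{le1} applies verbatim, its one hypothesis being satisfied because the $y$-weights $q_{j} =l_{i+j} \in ( 0,1 )$ are positive. If $k>0$ it gives that $f$ is stably equivalent to a \tmname{wsqh} singularity of type $( 1;l_{i+1} , \ldots ,l_{i+k} )$ with principal part $f_{0} ( y )$; since stable equivalence preserves the property of being a singularity, $f$ is one as well. If $k=0$ there are no middle variables, the principal part reduces to the nondegenerate quadratic form $\sum_{j=1}^{i} x_{j} z_{j}$, and Lemma \ref{le1} identifies $f$ as a germ of type $\mathcal{A}_{1}$.

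I expect the normal-form step to be the main obstacle. The difficulty is to peel off \emph{only} the $i$ hyperbolic pairs $x_{j} z_{j}$ attached to the extreme weights, while leaving the genuine \tmname{wqh} singularity $f_{0} ( y )$ in the middle variables intact -- the ordinary splitting lemma would instead strip away every nondegenerate quadratic direction, including squares $y_{m}^{2}$ produced by a weight $l_{m} =1/2$. Managing this selective splitting, dealing correctly with the boundary weights $l_{m} =0$ and $l_{m} =1$, and keeping the coordinate change weighted-homogeneous so that Lemma \ref{le1} stays applicable, is precisely where \tmname{Saito}'s weighted machinery is indispensable.
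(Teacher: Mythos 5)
Your proposal is correct and follows essentially the same route as the paper's proof: Saito's Korollar~1.9 for the pairing of the extreme weights, the proof of Saito's Lemma~1.10 for the coordinate change bringing the principal part to the hyperbolic normal form (\ref{w5.5}) --- the paper makes your ``weighted-homogeneous coordinate change'' precise by constructing a biholomorphism $G$ whose components (and those of $G^{-1}$) are weakly quasihomogeneous of the matching degrees, so that $( G^{-1} )^{\ast} f$ remains \tmname{wsqh} of type $( 1;l_{1} , \ldots ,l_{n} )$ --- and then Lemma~\ref{le1} to settle both the $k>0$ and $k=0$ cases. There is no essential difference in approach, only in the level of detail at the step you yourself flagged as the crux.
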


\begin{proof}
  By {\cite[Korollar 1.9]{Sai71}}, $i=n-i-k$ and moreover $l_{j} +l_{n+1-j}
  =1$, for $1 \leqslant j \leqslant i$. Repeating the proof of {\cite[Lemma
  1.10]{Sai71}} one concludes that the principal part $f_{0}$ of $f$ can be
  written in the following form
  \begin{equation}
    f_{0} ( x,y,z ) = \tilde{f}_{0} ( y ) + \sum_{j=1}^{i} ( g_{j} ( z )
    +h_{j} ( x,y,z ) ) x_{i+1-j} , \label{w6}
  \end{equation}
  where the coordinates in $( \mathbbm{C}^{n} ,0 )$ are denoted by $( x,y,z )
  \assign ( x_{1} , \ldots ,x_{i} ,y_{i+1} , \ldots ,y_{i+k} ,z_{i+k+1} ,
  \ldots ,z_{n} )$, the map-germ $( g_{1} , \ldots ,g_{i} )$ is a
  biholomorphism of $( \mathbbm{C}^{i} ,0 )$ and the functions $h_{j}$ satisfy
  $h_{j} ( 0,0,z ) =0$. It follows that $G ( x,y,z ) \assign ( x,y,g_{1} ( z )
  +h_{1} ( x,y,z ) , \ldots ,g_{i} ( y ) +h_{i} ( x,y,z ) )$ is a
  biholomorphism of $( \mathbbm{C}^{n} ,0 )$. Moreover, (\ref{w6}) implies
  that each component $G_{j}$ of $G$ is {\tmname{wqh}} of type $( l_{j} ;l_{1}
  , \ldots ,l_{n} )$, for $j=1, \ldots ,n$. Using the identity $\tmop{Id}
  =G^{-1} \circ G$ we easily check that each component $G_{j}^{-1}$ of
  $G^{-1}$ is also {\tmname{wqh}} of type $( l_{j} ;l_{1} , \ldots ,l_{n} )$,
  for $j=1, \ldots ,n$. Hence, for every monomial $w$ the function $( G^{-1}
  )^{\ast}  w$ is {\tmname{wqh}} of type $( \deg_{l}  w;l_{1} , \ldots ,l_{n}
  )$. It follows that $\tilde{f} \assign ( G^{-1} )^{\ast}  f$ is
  {\tmname{wsqh}} of type $( 1;l_{1} , \ldots ,l_{n} )$. Writing $f=f_{0}
  +f'$, we have
  \[ \tilde{f} ( x,y,z ) = ( G^{-1} )^{\ast}  f ( x,y,z ) = ( \tilde{f}_{0} (
     y ) + \sum_{j=1}^{i} z_{i+k+j} x_{i+1-j} ) + ( G^{-1} )^{\ast}  f' (
     x,y,z ) . \]
  Since the weights $l_{i+1} , \ldots ,l_{i+k}$ are positive, we can apply
  Lemma \ref{le1} to $\tilde{f}$. Clearly, this gives the required assertions
  also for $f=G^{\ast}   \tilde{f}$.
\end{proof}

  \begin{corollary}
  \label{cor3}Let $f: ( \mathbbm{C}^{3} ,0 ) \rightarrow ( \mathbbm{C},0 )$ be
  a weakly semiquasihomogeneous function of type $( 1;l_{1} ,l_{2} ,l_{3} )$
  with rational weights. Put $w_{i} \assign \left\{\begin{array}{ll}
    1/l_{i} , & \text{if } l_{i} \nin \{ 0,1 \}\\
    2, & \text{if } l_{i} \in \{ 0,1 \}
  \end{array}\right.$. Then formula $( \ref{w4} )$ holds with $n=3$.
\end{corollary}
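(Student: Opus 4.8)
The plan is to use the classification already established in Corollary~\ref{cor2.5} to reduce to situations governed by Theorems~\ref{th1}--\ref{th2} or by a one–variable computation. After permuting coordinates so that $l_1 \leqslant l_2 \leqslant l_3$, I would first record the numerical constraints coming (via the proof of Corollary~\ref{cor2.5}) from Saito's Korollar~1.9: the number $i$ of weights $\leqslant 0$ equals the number of weights $\geqslant 1$, and $2i+k=3$, where $k$ is the number of weights in $(0,1)$. Because $n=3$ is odd, $k\geqslant 1$ and only two cases survive, namely $(i,k)=(0,3)$ and $(i,k)=(1,1)$; in particular the $\mathcal{A}_1$ alternative of Corollary~\ref{cor2.5} cannot occur here.

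If $(i,k)=(0,3)$, all three weights lie in $(0,1)$, hence are positive and rational with $l_j\notin\{0,1\}$, so $w_j=1/l_j$ for every $j$. Then $f$ is weakly semiquasihomogeneous with positive weights, and Theorem~\ref{th2} gives $\mathcal{L}_0(f)=\mathcal{L}_0(f_0)$ for its principal part $f_0$, a quasihomogeneous singularity to which Theorem~\ref{th1} applies; formula~(\ref{w4}) follows at once.

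The real content is the case $(i,k)=(1,1)$, in which $l_1\leqslant 0<l_2<1\leqslant l_3$ and $l_1+l_3=1$ (so $l_1=0$ precisely when $l_3=1$). Here Corollary~\ref{cor2.5} exhibits $f$ as stably equivalent to a weakly semiquasihomogeneous function $g$ of one variable of type $(1;l_2)$; equivalently, as the dimensions match, $f$ is right equivalent to $g(y)+u^2+v^2$. The principal part of $g$ is a monomial $c\,y^{1/l_2}$, so $\operatorname{ord}g'=1/l_2-1=w_2-1\geqslant 1$. Invoking the invariance of the {\L}ojasiewicz exponent under biholomorphic changes of coordinates and estimating the gradient map $(g'(y),2u,2v)$ directly — its norm is comparable to $\max(|y|^{w_2-1},|u|,|v|)$, which dominates $\|(y,u,v)\|^{\,w_2-1}$ and is sharp along the $y$-axis — gives $\mathcal{L}_0(f)=w_2-1$.

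It remains to verify that the right-hand side of~(\ref{w4}) collapses to $w_2-1$. The pivotal identity is $(w_1-1)(w_3-1)=1$: if $l_1=0,\,l_3=1$ then both factors equal $1$, while if $l_1<0<1<l_3$ then $(w_1-1)(w_3-1)=\tfrac{1-l_1}{l_1}\cdot\tfrac{1-l_3}{l_3}=\tfrac{l_3}{l_1}\cdot\tfrac{l_1}{l_3}=1$, using $1-l_1=l_3$ and $1-l_3=l_1$. Consequently $\prod_j(w_j-1)=w_2-1$, and since $w_2-1>0$ whereas $w_1-1,w_3-1$ are either both $=1\leqslant w_2-1$ or both negative, also $\max_j(w_j-1)=w_2-1$; hence $\min(\max,\prod)=w_2-1=\mathcal{L}_0(f)$. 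The main obstacle is thus not analytic but bookkeeping: one must check that the convention $w_j=2$ for $l_j\in\{0,1\}$ is exactly what preserves $(w_1-1)(w_3-1)=1$ across the boundary case, and one must handle the suspension $g+u^2+v^2$ by the explicit gradient estimate above rather than by appealing to a black-box suspension invariance.
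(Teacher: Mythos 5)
Your proposal is correct and follows essentially the same route as the paper: order the weights, use Saito's Korollar~1.9 together with Corollary~\ref{cor2.5} to reduce the case $l_1\leqslant 0$ to a one-variable \textsc{wsqh} function (so $\mathcal{L}_0(f)=1/l_2-1$), cover the all-positive case by Theorems~\ref{th1} and~\ref{th2}, and finish with the arithmetic identity $(w_1-1)(w_3-1)=1$. The only (harmless) difference is that you verify the suspension step $g(y)+u^2+v^2$ by an explicit gradient estimate, where the paper simply invokes the stable-equivalence invariance of the {\L}ojasiewicz exponent.
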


\begin{proof}
  Let $l_{1} \leqslant l_{2} \leqslant l_{3}$. Assume that $l_{1} \leqslant
  0$. Again by {\cite[Korollar 1.9]{Sai71}} we conclude that $l_{3} =1-l_{1}$
  and $0<l_{2} <1$. Corollary \ref{cor2.5} asserts that $f$ is stably
  equivalent to a {\tmname{wsqh}} function $\tilde{f}$ of type $( 1;l_{2} )$.
  Hence, $0<l_{2} \leqslant \frac{1}{2}$ and $\mathcal{L}_{0} ( \tilde{f} ) =
  \tmop{ord}   \tilde{f} -1= \frac{1}{l_{2}} -1$. Since the {\L}ojasiewicz
  exponent is an invariant of the stable equivalence, $\mathcal{L}_{0} ( f ) =
  \frac{1}{l_{2}} -1$. We easily check that $\left( \ref{w4} \right)$ has the
  same value. The case of all weights being positive is covered by Theorems
  \ref{th1} and \ref{th2}.
\end{proof}

We remark that the above corollary is not true for $n=2$ unless one modifies
formula (\ref{w4}) slightly. Similarly, it has been shown by M.~S{\k e}kalski
{\cite{Sek10}} that formula (\ref{w4}) of Theorem \ref{th1} fails in
dimensions $n \geqslant 4$, even for {\tmname{wqh}} singularities with
positive rational weights. A version of formula (\ref{w4}), valid in any
dimension, is given below, in Theorem \ref{th4}.

We illustrate Corollary \ref{cor3} with the following example.

  \begin{example}
  Let us consider $f ( x,y,z ) =xy+x^{4} y^{3} + ( z+y )^{3}$. The singularity
  $f$ is {\tmname{wsqh}} of type $( 1;-2,3,1/3 )$ with principal part $f_{0}
  =xy+x^{4} y^{3} +z^{3}$ and also it is {\tmname{wsqh}} of type $(
  1;2/3,1/3,1/3 )$ with principal part $\tilde{f}_{0} =xy+ ( z+y )^{3}$.
  Corollary \ref{cor3} asserts that the {\L}ojasiewicz exponent of $f$ is
  equal to $2$, in both cases.
\end{example}

\begin{twier}\label{th4}
Let $f: ( \mathbbm{C}^{n} ,0 ) \rightarrow ( \mathbbm{C},0
)$ be a weakly semiquasihomogeneous function of type $( 1;l_{1} , \ldots
,l_{n} )$. Define the multisets $L \assign [ l_{1} , \ldots ,l_{n} ]$, $L^{-}
\assign \left[ 1-a:a \in L \wedge a> \frac{1}{2} \right]$ and the set $L^{0}
\assign ( L \setminus L^{-} ) \cup \left\{ \frac{1}{2} \right\}$. Put
$l_{\min} \assign \min  L^{0}$. Then $\mathcal{L}_{0} ( f ) =
\frac{1}{l_{\min}} -1$.
\end{twier}

\begin{proof}
  By {\cite[Korollar 1.9]{Sai71}} the number of the weights $\leqslant 0$ is
  the same as the number of the weights $\geqslant 1$ and such weights are
  distributed symmetrically with respect to $1/2$, counting multiplicities.
  Hence, upon applying Corollary \ref{cor2.5}, we can assume that $0<l_{j}
  <1$, $j=1, \ldots ,n$. (This affects neither $\mathcal{L}_{0} ( f )$ nor the
  number $l_{\min}$, also in the degenerate case of all the weights lying
  outside the interval $( 0,1 )$.)
  
  Now we repeat the last part of the proof of {\cite[Satz 1.3]{Sai71}}. Let
  $0<l_{1} \leqslant \ldots \leqslant l_{n} <1$ and assume that $l_{n} >
  \frac{1}{2}$. This means that the principal part $f_{0} ( z )$ of $f ( z )$
  can depend only linearly on $z_{n}$. However, $f_{0}$ is a singularity and
  hence its expansion has to involve a monomial of the form $z_{n} z_{i}$, for
  some $i<n$ (cf. {\cite[Korollar 1.6]{Sai71}}). It is easy to see that
  $f_{0}$ can be brought to the form $\tilde{f}_{0} ( z_{1} , \ldots ,
  \hat{z}_{i} , \ldots , \hat{z}_{n} ) +z_{i} z_{n}$ by a biholomorphism that
  does not violate the {\tmname{wsqh}} type of $f$. Hence, by this
  biholomorphism, $f$ is transformed to a {\tmname{wsqh}} function $\tilde{f}$
  with principal part of the form $\tilde{f}_{0} ( z_{1} , \ldots ,
  \hat{z}_{i} , \ldots , \hat{z}_{n} ) +z_{i} z_{n}$, which is the one assumed
  in (\ref{w5.5}). Applying Lemma \ref{le1} to $\tilde{f}$ we can reduce it to
  $( \tilde{f}_{0} + \breve{f}' ) ( z_{1} , \ldots , \hat{z}_{i} , \ldots ,
  \hat{z}_{n} ) +z_{i}^{2} +z_{n}^{2}$, where $\tilde{f}_{0} + \breve{f}'$ is
  a {\tmname{wsqh}} function of type $( 1;l_{1} , \ldots , \hat{l}_{i} ,
  \ldots ,l_{n-1} , \hat{l}_{n} )$ or the zero function for $n=2$, in which
  case $\mathcal{L}_{0} ( f ) =1= \tfrac{1}{l_{\min}} -1$. In the former case,
  we easily see that the replacement of $f$ with $\tilde{f}_{0} + \breve{f}'$
  again does not affect the numbers $\mathcal{L}_{0} ( f )$ and $l_{\min}$.
  Thus we reduce the number of variables. Continuing in this way, either we
  end up with a sum of squares at some stage, in which case $\mathcal{L}_{0} (
  f ) =1= \tfrac{1}{l_{\min}} -1$, or with a {\tmname{wsqh}} function
  $\bar{f}$ with weights lying in the interval $\left( 0, \frac{1}{2}
  \right]$. It is easy to see that such weights have to be rational, so in
  this last case $\bar{f}$ is in fact a {\tmname{sqh}} function. Moreover, for
  such weights the multiset $L^{-}$ is empty and $l_{\min}$ is just the
  minimal of the weights. By formula (\ref{w5}), we get the desired equality.
\end{proof}

\begin{commentary*}
If $f_{0}$ is the principal part of a
{\tmname{wsqh}} function $f$, Theorem \ref{th4} implies that $\mathcal{L}_{0}
( f ) =\mathcal{L}_{0} ( f_{0} ) < \infty$. This once again signifies that a
{\tmname{wsqh}} function $f$ is automatically an isolated singularity, which
-- by the above proof and Corollary \ref{cor2.5} -- is stably (and even
{\tmem{biholomorphically}}) equivalent to a {\tmname{sqh}} function.
Similarly, one can show that $\mu_{0} ( f ) = \mu_{0} ( f_{0} )$.
\end{commentary*}

For a {\tmname{wqh}} singularity $f$ one can compute its {\L}ojasiewicz
exponent in any system of coordinates. Namely, we have:

\begin{twier}\label{th5}
Let $f: ( \mathbbm{C}^{n} ,0 ) \rightarrow ( \mathbbm{C},0
)$ be an isolated singularity such that
\[ f=g_{1}  \frac{\partial f}{\partial z_{1}} + \ldots +g_{n}  \frac{\partial
   f}{\partial z_{n}} , \]
{\noindent}for some $g_{1} , \ldots ,g_{n} \in \mathbbm{C} \{ z_{1} , \ldots
,z_{n} \}$. Let $[ \alpha_{1} , \ldots , \alpha_{n} ] \subset \mathbbm{C}$ be
the multiset of~all~eigenvalues of the matrix $\left. \frac{\partial ( g_{1} ,
\ldots ,g_{n} )}{\partial ( z_{1} , \ldots ,z_{n} )} \right|_{z=0}$. Put
$l_{j} \assign \tmop{Re}   \alpha_{j}$, $j=1, \ldots ,n$, and define $L,
L^{-} ,  L^{0} ,l_{\min}$ as in Theorem \ref{th4}. Then
$\mathcal{L}_{0} ( f ) = \frac{1}{l_{\min}} -1$.
\end{twier}

\begin{proof}
  Since $f$ is a singularity, {\cite[Lemma 4.2]{Sai71}} implies that $g_{1} (
  0 ) = \ldots =g_{n} ( 0 ) =0$. Next, {\cite[Korollar 3.3]{Sai71}} asserts
  that there exists a formal system of coordinates in which $f$ is a
  {\tmname{wqh}} formal singularity $\tilde{f}$ of type $( 1; \alpha_{1} ,
  \ldots , \alpha_{n} )$. (Here we allow the weights to be complex numbers and
  $\tilde{f}$ to be a formal power series.) But then, in this system of
  coordinates, $\tilde{f}$ is also {\tmname{wqh}} of type $( 1;l_{1} , \ldots
  ,l_{n} )$. Since $\tilde{f}$ is finitely determined, we can assume it is a
  polynomial and by {\tmname{Artin}}'s Theorem {\cite{Art68}} -- that it is
  actually biholomorphically equivalent to $f$. The theorem follows upon
  applying Theorem \ref{th4}.
\end{proof}

\begin{remark*}
Using (\ref{w2}) one can define the {\L}ojasiewicz
exponent also for $f \in \mathbbm{C} [ [ z_{1} , \ldots ,z_{n} ] ]$. With this
definition, all the above results on $\mathcal{L}_{0}$ are true in the formal
setting.
\end{remark*}

 \begin{example}
  Let $f \assign xz+xyz^{2} +xy^{3} +y^{3} z^{2} +y^{5} +y^{2} z^{4} +z^{8}$.
  Using a computer algebra system one can check that $f \in ( \nabla f )$ and
  (non-unique) eigenvalues of the Jacobian matrix~of $( g_{1} ,g_{2} ,g_{3} )$
  at $0$ are $\seqsplit{[ -2919603413161694054973386275881695714936.56 \ldots
  ,1/5,2919603413161694054973386275881695714937.56 \ldots ]}$. By Theorem
  \ref{th5}, $\mathcal{L}_{0} ( f ) =4$. We also remark that in this case the
  value of $\mathcal{L}_{0} ( f )$ can be computed using either Theorem
  \ref{th2} (because $f \in \text{SQH} \left( 1; \frac{1}{2} , \frac{1}{5} ,
  \frac{1}{2} \right)$) or {\cite[Thm.~1.8 (1\tmrsup{o})]{Ole13}}, because $f$
  is Kouchnirenko non-degenerate and its Newton diagram consists of
  exceptional faces only (see {\tmem{op.~cit.}} for the details).
\end{example}

We end the paper with a corollary concerning the conjecture of
{\tmname{Teissier}}. Namely, just as there is the famous {\tmname{Zariski}}
problem on multiplicity, the same question can be asked for the {\L}ojasiewicz
exponent: is it a topological invariant of a singularity? Since the question
seems to be very difficult (although it is known to have the affirmative
answer in case of germs of two indeterminates, see {\cite{Tei77a}} or
{\cite{Plo01}}, and also for {\tmname{qh}} singularities of three
indeterminates {\cite[Corollary 2]{KOP09}}), it is natural to ask a weaker
one:

  \begin{question*}[\tmstrong{Teissier's conjecture}]
  If $( f_{s} )$ is a topologically
  trivial deformation of a singularity $f_{0}$, does $\mathcal{L}_{0} ( f_{0}
  ) =\mathcal{L}_{0} ( f_{s} )$, for small $s \in \mathbbm{C}$?
  \end{question*}

{\noindent}It should be noted that it is already known that {\L}ojasiewicz
exponent is lower semi-continuous in $\mu$-constant families ({\cite{Tei77a}},
{\cite{Plo10}}).

For {\tmname{sqh}} (and also {\tmname{wsqh}}) singularities we can answer
Teissier's question in the affirmative.

  \begin{corollary}
  \label{cor4}If $f: ( \mathbbm{C}^{n} ,0 ) \rightarrow ( \mathbbm{C},0 )$ is
  a weakly semiquasihomogeneous function then the {\L}ojasiewicz exponent
  $\mathcal{L}_{0}$ is constant on every topologically trivial deformation of
  $f$.
\end{corollary}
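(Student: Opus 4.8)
The plan is to reduce the assertion to a semicontinuity argument, the crucial observation being that a topologically trivial deformation of an isolated singularity is automatically $\mu$-constant. First I would record the standing facts. By the Commentary following Theorem \ref{th4}, a {\tmname{wsqh}} function $f$ is automatically an isolated singularity with finite Milnor number $\mu ( f )$, and $\mathcal{L}_0 ( f )$ is finite (indeed given explicitly by Theorem \ref{th4}). Let $( f_s )$ be a topologically trivial deformation with $f_0 = f$, over a connected base (a small disc). Topological triviality means that all the germs $f_s$ are topologically equivalent to $f$; in particular each $f_s$ is again an isolated singularity, and since the Milnor number is a topological invariant of an isolated hypersurface singularity (it is the rank of the middle homology of the Milnor fibre, preserved under a homeomorphism of the Milnor fibrations), one obtains $\mu ( f_s ) = \mu ( f )$ for every $s$. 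Thus the whole family is $\mu$-constant, not merely at $0$ but between any two of its members.

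Next I would invoke the two semicontinuity properties of $\mathcal{L}_0$. On the one hand, $\mathcal{L}_0$ is upper semicontinuous in an arbitrary deformation (Teissier), so $\mathcal{L}_0 ( f_s ) \leqslant \mathcal{L}_0 ( f_{s_0} )$ for $s$ near any fixed $s_0$; on the other hand, in a $\mu$-constant family it is lower semicontinuous (the cited results of Teissier and P{\l}oski), so $\mathcal{L}_0 ( f_s ) \geqslant \mathcal{L}_0 ( f_{s_0} )$ for $s$ near $s_0$. Because the family is $\mu$-constant at every parameter value, and not only near $0$, both inequalities are available at every $s_0$ of the base. Combining them shows that the function $s \mapsto \mathcal{L}_0 ( f_s )$ is genuinely continuous on the base.

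Finally I would upgrade continuity to constancy. The function $s \mapsto \mathcal{L}_0 ( f_s )$ takes values in $\mathbbm{Q}$ (as recalled in the Introduction, a finite {\L}ojasiewicz exponent is always rational), and a continuous map from a connected base into the totally disconnected set $\mathbbm{Q}$ must be constant. Hence $\mathcal{L}_0 ( f_s ) = \mathcal{L}_0 ( f_0 ) = \mathcal{L}_0 ( f )$ for all small $s$, which is exactly the claim.

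I expect the delicate point to be the passage from the pointwise semicontinuity statements -- which a priori only control $\limsup$ and $\liminf$ as $s$ tends to a fixed parameter -- to honest continuity, and hence the necessity of knowing that $\mu$-constancy, and therefore the lower-semicontinuity input, holds uniformly along the family rather than only at the central fibre. This is precisely what topological triviality provides, via the topological invariance of the Milnor number, and it is the step where the hypothesis is really used. Note that the explicit formula of Theorem \ref{th4} is \emph{not} applied to the members $f_s$ (which need not be {\tmname{wsqh}}): only the general semicontinuity machinery together with the rationality of $\mathcal{L}_0$ is needed once $\mu$-constancy is in hand.
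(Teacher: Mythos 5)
Your opening reduction is sound: topological triviality of the family does force $\mu$-constancy along the whole family, because the Milnor number is a topological invariant of an isolated hypersurface singularity, and your closing step (a continuous $\mathbbm{Q}$-valued function on a connected base is constant) is also fine. The fatal step is the claimed ``upper semicontinuity of $\mathcal{L}_{0}$ in an arbitrary deformation (Teissier)''. No such theorem is available, and the paper cites none: the only semicontinuity statement it records --- attributed to Teissier \cite{Tei77a} and P{\l}oski \cite{Plo10} --- is the \emph{lower} semicontinuity of $\mathcal{L}_{0}$ in $\mu$-constant families, i.e.\ exactly one of the two inequalities your argument needs. The missing inequality, $\limsup_{s \rightarrow s_{0}} \mathcal{L}_{0} ( f_{s} ) \leqslant \mathcal{L}_{0} ( f_{s_{0}} )$ (even restricted to $\mu$-constant families), is precisely the open content of Teissier's conjecture. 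A clear warning sign: your argument uses the weak semiquasihomogeneity of $f$ only to know that $f$ is an isolated singularity with finite rational $\mathcal{L}_{0} ( f )$, which is true of \emph{every} isolated singularity; so if your proof were correct it would settle Teissier's conjecture in full generality, whereas the paper presents Corollary \ref{cor4} explicitly as only a \emph{partial} affirmative answer.

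The paper's proof uses the wsqh hypothesis essentially and goes around the missing semicontinuity. After reducing to a semiquasihomogeneous $f$ by a biholomorphism (cf.\ the Commentary after Theorem \ref{th4}), it applies Varchenko's theorem \cite{Var82}: the weights are invariant in $\mu$-constant deformations of a quasihomogeneous germ, so every member $f_{s}$ of the family is again semiquasihomogeneous of the same type. The formula of Theorem \ref{th3}, which computes $\mathcal{L}_{0}$ purely in terms of the weights, then yields $\mathcal{L}_{0} ( f_{s} ) =\mathcal{L}_{0} ( f_{0} )$ outright: a structure theorem for the nearby fibres replaces the upper bound you tried to extract from general semicontinuity, and no continuity-plus-rationality endgame is needed. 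To repair your proposal you would have to prove the $\limsup$ inequality above for $\mu$-constant families, and no proof of this is known.
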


\begin{proof}
  Assume first that $f$ is {\tmname{sqh}}. From Theorem \ref{th3} it follows
  that the {\L}ojasiewicz exponent of $f$ is determined by its weights. On the
  other hand, the Theorem of {\tmname{Varchenko}} {\cite{Var82}} implies that
  the weights are invariant in $\mu$-constant deformations of $f$.
  
  If we assume only that $f$ is {\tmname{wsqh}}, then we may biholomorphically
  transform $f$ into a {\tmname{sqh}} function $\tilde{f}$ (cf.~the commentary
  after Theorem \ref{th4}). This transformation also~carries any deformation
  $f_{s}$ of $f$ to a deformation $\tilde{f}_{s}$ of $\tilde{f}$; and the
  Milnor and~{\L}ojasiewicz numbers remain unchanged. Thus, we reduce the
  problem to the first case.
\end{proof}

{\small{\begin{flushright}
  Szymon Brzostowski{
  
  }Faculty of Mathematics{
  
  }and Computer Science{
  
  }University of {\L}{\'o}d{\'z}{
  
  }ul. Banacha 22,{
  
  }90-238 {\L}{\'o}d{\'z}, Poland{
  
  }brzosts@math.uni.lodz.pl
\end{flushright}}}

\end{document}